\newtheorem{theorem}{Theorem}[section]
\newtheorem{corollary}[theorem]{Corollary}
\newtheorem{lemma}[theorem]{Lemma}
\theoremstyle{definition}
\newtheorem{definition}[theorem]{Definition}
\numberwithin{equation}{section}
\definecolor{qqccqq}{rgb}{0,0.8,0}
\definecolor{qqzzcc}{rgb}{0,0.6,0.8}
\tikzset{every loop/.style={min distance=10mm,looseness=10}}
\tikzset{every state/.style={minimum size=2mm}}
\newcommand\MI[1]{M_{\mathit{I}}(#1)}
\newcommand\MII[1]{M_{\mathit{II}}(#1)}
\newcommand\MIII[1]{M_{\mathit{III}}(#1)}
\newcommand\MIV{M_{\mathit{IV}}}
\newcommand\MV{M_{\mathit{V}}}
\newcommand\MVI{M_{\mathit{VI}}}
\newcommand\MVII{M_{\mathit{VII}}}
\newcommand\MIast[1]{M_{\mathit{I}}^*(#1)}
\newcommand\MVast{M_{\mathit{V}}^*}
\newcommand\ForbRow{\mathcal F_{\textup{circR}}}
\newcommand\IForbRow{\mathcal F_{\textup{IcircR}}}
\newcommand\GForbRow{\mathcal G_{\textup{split}}}
\newcommand\miop{\mathbin{\oplus}}
\let\c@subfigure\relax
	\title{Forbidden Induced Subgraph Characterization of Word-Representable Split Graphs}
\author{\hspace{1cm} Eshwar Srinivasan \ and Ramesh Hariharasubramanian \\
	{{\footnotesize s.eshwar@iitg.ac.in},\ {\footnotesize  ramesh\_h@iitg.ac.in}}\\{\footnotesize Department of Mathematics, Indian Institute of Technology Guwahati, Guwahati, Assam 781039, India}}
\begin{document}
	
	\maketitle
	\begin{abstract}
		The class of word-representable graphs, introduced in connection with the study of the Perkins semigroup by Kitaev and Seif, has attracted significant attention in combinatorics and theoretical computer science due to its deep connections with graph orientations and combinatorics on words. A graph is word-representable if and only if it admits a semi-transitive orientation, which is an acyclic orientation such that for any directed path \(v_0 \rightarrow v_1 \rightarrow \cdots \rightarrow v_m\) with \(m \ge 2\), either there is no arc between \(v_0\) and \(v_m\), or, for all \(1 \le i < j \le m\), there exists an arc from \(v_i\) to \(v_j\). Split graphs, whose vertex set can be partitioned into a clique and an independent set, constitute a natural yet nontrivial subclass for studying word-representability. However, not all split graphs are semi-transitive, and the characterization of minimal forbidden induced subgraphs for semi-transitive split graphs remains an open problem.
		
		In this paper, we introduce a new matrix property called the \(I\)-circular property, which is closely related to the well-known \(D\)-circular property introduced by Safe. The \(I\)-circular property requires that both the rows of a matrix and the pairwise intersections of rows form circular intervals under some linear ordering of the columns. Using this property, we establish a direct connection between the structure of semi-transitive split graphs and the matrix representation of their adjacency relationships. Our main result is a complete forbidden submatrix characterization of the \(I\)-circular property, which in turn provides a characterization for semi-transitive split graphs in terms of minimal forbidden induced subgraphs.
		
		\textbf{Keywords: } Word-representable graphs, Semi-transitive orientation, Split graphs, Forbidden induced subgraphs, circular-ones property, $D$-circular property, Forbidden sbmatrix charecterisation, $I$-circular property.
	\end{abstract}

	\section{Introduction}
	
	A graph \(G = (V,E)\) is said to be \emph{word-representable} if there exists a word \(w\) over the alphabet \(V\) such that for any two distinct letters \(x,y \in V\), the letters \(x\) and \(y\) alternate in \(w\) if and only if \((x,y)\) is an edge in \(E\). This concept, which first emerged in the study of the Perkins semigroup through the work of Kitaev and Seif~\cite{kitaev2008word}, has generated significant interest and inspired a broad range of research focused on the recognition and characterization of such graphs.
	
	A crucial characterization, due to Halldórsson et al.~\cite{HALLDORSSON2016164}, establishes \emph{semi-transitive orientations} as the defining structural property of word-representable graphs. That is, a graph is word-representable if and only if it admits a semi-transitive orientation. A \textit{semi-transitive} orientation is an acyclic orientation such that, for any directed path $v_0 \rightarrow v_1 \rightarrow \cdots \rightarrow v_m$ with $m \ge 2$, either there is no arc between $v_0$ and $v_m$, or, for all $1 \le i < j \le m$, an arc exists from $v_i$ to $v_j$. This elegant characterization places semi-transitive orientations at the core of the theory of word-representable graphs, serving as a conceptual bridge between combinatorics on words and graph structure. For further reading on word-representable graphs, we refer the reader to~\cite{kitaev2008word, kitaev2008representable, kitaev2011representability, kitaev13, HALLDORSSON2016164, kitaev2015words, kitaev2017comprehensive, broere2018word, kitaev2021human, srinivasan2024minimum, huang2024embedding}.
	
	In this broader context, \emph{split graphs}—graphs whose vertex set can be partitioned into a clique and an independent set—constitute an important subclass for studying word-representability. The study of word-representable split graphs is motivated by both theoretical and practical considerations. From a theoretical perspective, split graphs occupy a significant place in graph theory: they are hereditary, meaning that any induced subgraph of a split graph is itself a split graph, and they form a subclass of chordal graphs. Their simple definition, combined with rich structural properties, makes them a natural setting for testing conjectures and exploring graph-theoretic phenomena. From a practical perspective, split graphs serve as models for real-world systems comprising a densely connected core interacting with a sparsely connected periphery. Examples include social networks, where a close-knit group interacts with occasional outsiders; biological networks; and database systems designed for hierarchical queries. These features make split graphs a valuable object of study in combinatorics and applications.
	
	A \emph{hereditary graph class} can often be characterized by a family of \emph{forbidden induced subgraphs}: that is, a graph belongs to the class if and only if it contains no graph from a specified family \(\mathcal{F}\) as an induced subgraph. The set \(\mathcal{F}\) is called the collection of \emph{minimal forbidden induced subgraphs} for the class. For example, \emph{cographs} are precisely the graphs that do not contain an induced path on four vertices (\(P_4\))~\cite{corneil1981}. Similarly, \emph{trivially perfect graphs} are those that forbid both the path \(P_4\) and the cycle \(C_4\)~\cite{golumbic1980}.
	
	The study of hereditary graph classes and their forbidden subgraph characterizations is central to algorithm design and complexity theory. When the set of minimal forbidden induced subgraphs \(\mathcal{F}\) is finite, testing membership in the class reduces to checking for the presence of finitely many induced subgraphs, often enabling the design of efficient, polynomial-time recognition algorithms. Not every hereditary graph class, however, admits such a finite forbidden subgraph characterization, which poses additional challenges for both structural analysis and algorithm development.
	
	Since word-representable graphs are hereditary, a line of research has focused on identifying their set of minimal forbidden induced subgraphs, which remains unknown. As a result, the problem of determining the forbidden induced subgraphs for word-representable graphs can be approached by restricting attention to smaller graph classes, such as split graphs.
	
     The word-representability of split graphs was first studied by Kitaev et al.~\cite{kitaev2021word}, where the authors characterized word-representable split graphs in terms of forbidden subgraphs in which vertices in the independent set have degree at most $2$, or the size of the clique is $4$. Moreover, they provided necessary and sufficient conditions for an orientation of a split graph to be semi-transitive. Subsequently, in~\cite{chen2022representing}, Chen et al. computationally characterized word-representable split graphs with clique size $5$ in terms of forbidden subgraphs. More recently, in~\cite{roy2025word}, Roy and one of the present authors characterized word-representable split graphs with independent set size $4$, in terms of forbidden subgraphs as well.
		
	Recent work by Kitaev and Pyatkin~\cite{kitaev2024semi} made significant progress by characterizing semi-transitive split graphs using matrix-theoretic tools. They showed that the adjacency structure of a split graph can be encoded as a \((0,1)\)-matrix and analyzed via the \emph{circular-ones property}. Specifically, they proved that a split graph is semi-transitive if and only if its adjacency matrix satisfies the circular-ones property for rows, together with an additional structural constraint on the placement of ones.
	
	Among various matrix properties, the circular-ones property has found applications in several graph-theoretic characterizations. The circular-ones property (see \textit{Definition}~\ref{cir}) generalizes the consecutive-ones property. In~\cite{tucker1972structure}, Tucker provided a forbidden submatrix characterization for the consecutive-ones property. Later, Booth and Lueker~\cite{booth1976pq} presented a linear-time algorithm for efficiently recognizing the consecutive-ones property. In~\cite{mdsafe1}, Safe provided an analogous characterization of the circular-ones property and presented a linear-time algorithm to recognize it. Furthermore, in~\cite{mdsafe2}, Safe introduced a matrix property called the \emph{D-circular property}, a special case of the circular-ones property in which both the rows and the set difference between any two rows form circular intervals (see \textit{Definition }\ref{ciri}) under some linear ordering of columns. In the same work, he also established a forbidden submatrix characterization for the \(D\)-circular property and presented a linear-time recognition algorithm for it.
	
	In this paper, we introduce a matrix property that extends and refines the \(D\)-circular property, which we call the \emph{\(I\)-circular property}. This property requires that the rows of a matrix, as well as all pairwise intersections of these rows, form circular intervals under some linear ordering of the columns. This refinement serves as a key tool for characterizing semi-transitive split graphs and offers a natural matrix-theoretic framework for identifying forbidden configurations.
	
	The main objective of this work is to establish a \emph{minimal forbidden submatrix characterization} for the \(I\)-circular property, thereby shedding light on the minimal forbidden induced subgraph characterization of semi-transitive split graphs. To achieve this, we develop a unified framework linking the \(I\)-circular property of matrices to the combinatorial properties of split graphs. This approach culminates in the identification of a complete set of minimal forbidden induced subgraphs.
	
	This paper is organized as follows. In Section~2, we review key definitions and preliminaries on sequences, graphs, and matrices, with a particular focus on the circular-ones property and semi-transitive orientations. Section~3 introduces the \(I\)-circular property, explores its relationship with the circular-ones property, and identifies the set of minimal forbidden submatrices for the \(I\)-circular property. Subsections~3.1–3.3 present several matrix constructions and lemmas that support the main results. Section~4 bridges these matrix-theoretic results to the graph-theoretic domain, establishing a forbidden induced subgraph characterization of semi-transitive split graphs. Finally, Section~5 provides a summary and discusses potential directions for future research.
	
	\section{Definitions and Preliminaries}
	For each positive integer $k$, we denote the set $\{1, 2, \ldots, k\}$ by $[k]$. If $k = 0$, we define $[k]$ to be the empty set. We denote the identity function by $id_k$. A set $S$ is said to be properly contained in $T$ if $S \subseteq T$ and $S \neq T$. Whenever we say that $i$ is modulo $k$, or that addition and subtraction involving $i$ are taken modulo $k$, we mean that $i$, or any addition or subtraction involving $i$, is taken modulo $k$ and yields the unique element $j \in [k]$ with the same remainder as $j$ upon division by $k$.
	\subsection{Sequences}
	Let $a = a_1 a_2 \ldots a_k$ be a sequence of length $k$. The shift of $a$ is the sequence $a_2 a_3 \ldots a_k a_1$. The length of any sequence $a$ is denoted by $|a|$. A \emph{binary bracelet}~\cite{MR1857399} is a lexicographically smallest element in an equivalence class of binary sequences under shifts and reversals. For each $k\geq 4$, let $A_k$ be the set of binary bracelets of length $k$. Let $A_3=\{000,111\}$. The elements of $A_3$ are binary bracelets but the two other binary bracelets of length $3$ ($001$ and $011$) do not belong to $A_3$. If $b$ is a sequence and $i \in [k]$, then $b$ is said to occur circularly in $a$ at position $i$ if $|b| \leq k$ and $a_i a_{i+1} \ldots a_{i + |b| - 1} = b$, where the subscripts are taken modulo~$k$. If $a$ is binary (that is, each $a_i \in \{0,1\}$), then the complement of $a$ is defined as the sequence $\overline{a}$ obtained from $a$ by interchanging $0$’s and $1$’s. An index map of $a$ is an injective function $\rho \colon [k'] \to [k]$. If $\rho$ is an index map of $a$, then the sequence $a_{\rho(1)} a_{\rho(2)} \ldots a_{\rho(k')}$ is denoted by $a_{\rho}$. A sequence is quaternary if each of its elements lies in the set $\{0,1,2,3\}$.
	
	\subsection{Graphs}
	Let $G$ be a graph. We denote the vertex set of $G$ by $V(G)$ and the edge set of $G$ by $E(G)$. Let $v \in V(G)$. The neighborhood of $v$ in $G$, denoted by $N_G(v)$, is the set of vertices adjacent to $v$ in $G$. The closed neighborhood of $v$ in $G$, denoted by $N_G[v]$, is the set $N_G(v) \cup \{v\}$. If $X \subseteq V(G)$, the subgraph of $G$ induced by $X$ is the graph whose vertex set is $X$ and whose edge set consists of all edges of $G$ whose endpoints both lie in $X$. If $H$ is a graph, we say that $G$ contains $H$ as an induced subgraph if $H$ is isomorphic to some induced subgraph of $G$. An independent set (respectively, clique) of a graph $G$ is a set of vertices of $G$ that are pairwise nonadjacent (respectively, pairwise adjacent). A graph class is hereditary if it is closed under taking induced subgraphs.  
	
	A graph $G$ is called a split graph if its vertex set $V(G)$ can be partitioned into two sets $I$ and $C$ such that $I$ induces an independent set and $C$ induces a clique. We assume that no vertex from the independent set $I$ is adjacent to all vertices of the clique $C$.  
	
	All graphs in this study are assumed to be simple, that is, finite, undirected, and having no loops or multiple edges. For additional graph-theoretic terminology and notation, we refer the reader to \cite{MR1367739}.
	\subsection{Matrices}
	
	All the matrices considered in this article are binary, that is, every entry of the matrices is either $0$ or $1$. Each row $r$ of a matrix $M$ is defined as the set of columns of $M$ having a $1$ in row $r$. A row $r$ is said to be contained in a row $s$, denoted by $r \subseteq s$, if $s$ has a $1$ in every column where $r$ has a $1$. Similarly, if $r$ and $s$ are rows of $M$, then $r \cap s$ denotes the set of columns having a $1$ in both $r$ and $s$. A row is empty if all its entries are $0$. A row is trivial if all its entries are equal to the same value. Analogous conventions apply to the columns of $M$.
	
	Let $M$ be a $k \times l$ matrix. The rows and columns of $M$ are labeled from $1$ to $k$ and from $1$ to $l$, respectively. The complement of row $i$ is obtained by interchanging $0$’s and $1$’s in that row. The complement of $M$, denoted by $\overline{M}$, is the matrix obtained from $M$ by complementing every row of $M$. If $a$ is a binary sequence of length $k$, we denote by $a \miop M$ the matrix obtained from $M$ by complementing all rows $i \in [k]$ such that $a_i = 1$. A row map of a matrix is an injective function $\rho \colon [k'] \to [k]$ for some positive integer $k'$. A column map of a matrix is an injective function $\sigma \colon [l'] \to [l]$ for some positive integer $l'$. If $\rho \colon [k'] \to [k]$ is a row map and $\sigma \colon [l'] \to [l]$ is a column map, then we denote by $M_{\rho, \sigma}$ the $k' \times l'$ matrix obtained from $M$ such that the $(i,j)$-th entry of $M_{\rho, \sigma}$ is the $(\rho(i), \sigma(j))$-th entry of $M$.
	
	Let $M$ and $M'$ be matrices. $M$ contains $M'$ as a configuration if some submatrix of $M$ equals $M'$ up to permutations of rows and of columns. Notably, a matrix $M$ is said to contain $M'$ as a configuration if and only if there exist a row map $\rho$ and a column map $\sigma$ of $M$ such that $M_{\rho,\sigma} = M'$. We say that $M$ and $M'$ represent the same configuration if $M$ and $M'$ equals up to permutations of rows and of columns. 
	
	If $a$ is a binary sequence whose length equals the number of rows of $M$, then $(a \miop M)_{\rho,\sigma} = a_\rho \miop M_{\rho,\sigma}. $ Moreover, if $\rho$ and $\sigma$ are a row map and a column map of $M$, and $\rho'$ and $\sigma'$ are a row map and a column map of $M_{\rho,\sigma}$, then the compositions $\rho' \circ \rho$ and $\sigma' \circ \sigma$ are a row map and a column map of $M$, and $M_{\rho' \circ \rho,\, \sigma' \circ \sigma} = \left(M_{\rho,\sigma}\right)_{\rho',\sigma'}. $ If $s$ is a positive integer and $n_1, n_2, \ldots, n_s$ are pairwise distinct positive integers, the injective function with domain $[s]$ that maps $i$ to $n_i$ for each $i \in [s]$ is denoted by $\langle n_1, n_2, \ldots, n_s \rangle$. In particular, if $i \in [k]$, then $M_{\langle i \rangle}$ denotes the $1 \times \ell$ matrix whose only row equals row $i$ of $M$. If $M$ is a matrix, then $M^*$ denotes the matrix obtained from $M$ by adding an empty column.
	
	\subsection{Circular-Ones Property}
	\begin{definition}\label{ciri}
		If $\preccurlyeq_X$ is a linear order on some set $X$ and $a,b\in X$, then the \emph{circular interval of $\preccurlyeq_X$ with left endpoint $a$ and right endpoint $b$}, denoted $[a,b]_{\preccurlyeq_X}$, is either $\{x\in X\colon\;a\preccurlyeq_X x\preccurlyeq_X b\}$ if $a\preccurlyeq_X b$, or $\{x\in X\colon\,x\preccurlyeq_X b\text{ or }a\preccurlyeq_X x\}$ if $b\preccurlyeq_X a$. A \emph{circular interval of $\preccurlyeq_X$} is either the empty set or $[a,b]_{\preccurlyeq_X}$ for some $a,b\in X$.
	\end{definition}
	\begin{definition}\label{cir}
		A matrix $M$ has the circular-ones property for rows if there is a linear order $\preccurlyeq_C$ of the columns of $M$ such that each row of $M$ is a circular order of $\preccurlyeq_C$, and $\preccurlyeq_C$ is called a circular-ones order. Analogous definition apply to the columns of $M$. If no mention is made for rows or columns, we mean the corresponding property for the rows.
	\end{definition}
	\begin{figure}[h]
		\ffigbox[\textwidth]{%
			\begin{subfloatrow}
				\hspace{1.25cm}\subfloat[$\MI k$ for each $k\geq 3$]{%
					\hspace{-1.25cm}$\MI k=\left(\begin{array}{ccccc}
						1 & 1 \\
						& 1 & 1 \\
						&   & \ddots & \ddots \\
						&   &        &     1 & 1\\
						1 & 0 & \cdots &     0 & 1
					\end{array}\right)$
				}\qquad
				\hspace{1.25cm}\subfloat[t][$\MII k$ for each $k\geq 4$]{%
					\hspace{-1.25cm}\begin{math}
						\MII k=\left(\begin{array}{cccccc}
							1  & 1  &        &        &   & 0\\
							& 1  & 1      &        &   & 0\\
							&    & \ddots & \ddots &   & \vdots\\
							&    &        &      1 & 1 & 0\\
							1  & 1  & \cdots &      1 & 0 & 1\\
							0  & 1  & \cdots &      1 & 1 & 1
						\end{array}\right)
					\end{math}
			}\end{subfloatrow}
			
			\begin{subfloatrow}
				\hspace{1.25cm}\subfloat[$\MIII k$ for each $k\geq 3$]{%
					\hspace{-1.25cm}\begin{math}
						\MIII k=\left(\begin{array}{cccccc}
							1  & 1  &        &        &   & 0\\
							& 1  & 1      &        &   & 0\\
							&    & \ddots & \ddots &   & \vdots\\
							&    &        &      1 & 1 & 0\\
							0  & 1  & \cdots &      1 & 0 & 1\\
						\end{array}\right)
					\end{math}
				}\qquad
				\hspace{1.25cm}\subfloat[$\MIV$]{%
					\hspace{-1.25cm}	\begin{math}
						\MIV = \left(\begin{array}{cccccc}
							1 & 1 & 0 & 0 & 0 & 0\\
							0 & 0 & 1 & 1 & 0 & 0\\
							0 & 0 & 0 & 0 & 1 & 1\\
							0 & 1 & 0 & 1 & 0 & 1
						\end{array}\right)
					\end{math}
				}
			\end{subfloatrow}
			
			\hspace{1cm}\subfloat[$\MV$]{%
				\hspace{-1cm}\begin{math}
					\MV = \left(\begin{array}{cccccc}
						1 & 1 & 0 & 0 & 0\\
						1 & 1 & 1 & 1 & 0\\
						0 & 0 & 1 & 1 & 0\\
						1 & 0 & 0 & 1 & 1
					\end{array}\right)
				\end{math}
			}
		}{\caption{Tucker matrices, where $k$ denotes the number of rows and omitted entries are $0$'s}\label{fig:TuckerMatrices}}
	\end{figure}
	Tucker~\cite{tucker1972structure} characterized the consecutive-ones property by a minimal set of forbidden submatrices, known as \emph{Tucker matrices}. The Tucker matrices are displayed in Figure~\ref{fig:TuckerMatrices}. In~\cite{mdsafe1}, Mart\'in D. Safe gave an analogous characterization for the circular-ones property. The corresponding set of forbidden submatrices is
	\[ \ForbRow=\{a\miop\MIast k\colon\,k\geq 3\mbox{ and }a\in A_k\}\cup\{\MIV,\overline{\MIV},\MVast,\overline{\MVast}\}, \]
	where $\MIast k$ and $\MVast$ denote $(\MI k)^*$ and $(\MV)^*$, $A_3=\{000,111\}$ and, for each $k\geq 4$, $A_k$ is the set of all binary bracelets of length $k$. A matrix $M$ is a \emph{minimal forbidden submatrix for the circular-ones property} if $M$ is the only submatrix of $M$ not having the circular-ones property.
	\begin{theorem}[\cite{mdsafe1}]\label{circr} A matrix $M$ has the circular-ones property if and only if $M$ contains no matrix in the set $\ForbRow$ as a configuration. Moreover, there is a linear-time algorithm that, given any matrix $M$ not having the circular-ones property, outputs a matrix in $\ForbRow$ contained in $M$ as a configuration. In addition, every matrix in $\ForbRow$ is a minimal forbidden submatrix for the circular-ones property. Hence, for each $M\in\ForbRow$ and each binary sequence $a$ whose length equals the number of rows of $M$, $a\miop M$ represents the same configuration as some matrix in $\ForbRow$. 
	\end{theorem}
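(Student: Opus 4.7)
The plan is to reduce the circular-ones property to the consecutive-ones property via row complementation, and then invoke Tucker's forbidden submatrix characterization. The essential observation is that a $k$-row matrix $M$ has the circular-ones property if and only if there exists a binary sequence $a\in\{0,1\}^k$ such that $a\miop M$ has the consecutive-ones property: given any circular-ones order, I would pick a column $j_0$ as a ``cut point'' and complement precisely those rows whose wrap-around interval contains $j_0$, so that every row becomes a consecutive interval; the converse is immediate. Since the complement of a circular interval is again a circular interval in the same linear order, complementing rows preserves both the circular-ones property and its failure.

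For the necessity direction, I would verify directly that each member of $\ForbRow$ fails the circular-ones property. The four sporadic matrices $\MIV$, $\overline{\MIV}$, $\MVast$, $\overline{\MVast}$ require only finite case analysis. For the family $\{a\miop\MIast{k}\}_{a\in A_k}$, the added empty column of $\MIast{k}$ blocks using any ``cut'' through the wrap-around of its last row, and the remaining wrap-around structure then blocks every other candidate cut. The restriction to bracelets $A_k$ corresponds to quotienting the set of complementation sequences by the cyclic and reflection symmetries of column orderings, giving an irredundant parametrization.

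The sufficiency direction is the main technical work. Suppose $M$ lacks the circular-ones property; then for every binary $a$, the matrix $a\miop M$ lacks the consecutive-ones property, so by Tucker's theorem it contains some Tucker matrix $T\in\{\MI{k},\MII{k},\MIII{k},\MIV,\MV\}$ as a configuration. I would first reduce to the case in which $M$ is itself minimal with respect to failing the circular-ones property; otherwise recurse on a minimal offending submatrix. Once $M$ is minimal, choose $a$ so that a Tucker submatrix $T$ in $a\miop M$ uses all rows of $M$, and then argue, using minimality and the handling of trivial rows and columns, that $T$ uses essentially all columns of $M$ as well. The main obstacle is the case analysis on the type of $T$: when $T\in\{\MII{k},\MIII{k}\}$, a different choice of $a$ absorbs the ``extra'' columns of $T$ into the wrap-around, collapsing $M$ to the form $a'\miop\MIast{k}$ for some bracelet $a'\in A_k$; when $T\in\{\MIV,\MV\}$, no such collapse is possible and $M$ coincides with $\MIV$, $\overline{\MIV}$, $\MVast$, or $\overline{\MVast}$ up to row complementation and column permutation.

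For the linear-time algorithm, I would run the Booth--Lueker PQ-tree recognition for the consecutive-ones property on $a\miop M$ for a canonical choice of $a$ (for instance, the one zeroing out a designated column of $M$), and then translate the resulting Tucker certificate into a matrix in $\ForbRow$ via the above reduction; standard PQ-tree data structures keep the overall cost linear. Minimality of each $M\in\ForbRow$ is verified by direct inspection, exhibiting an explicit circular-ones order for each proper submatrix. Finally, the closure statement is formal: for $M\in\ForbRow$ and any binary $a$ of the appropriate length, $a\miop M$ lacks the circular-ones property, hence contains some $M'\in\ForbRow$ as a configuration; since $a\miop M$ and $M$ have the same dimensions, minimality of $M'$ forces this containment to be an equality up to permutations of rows and columns, so $a\miop M$ itself represents the same configuration as $M'$.
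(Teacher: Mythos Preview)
The paper does not give a proof of this theorem: it is quoted from Safe~\cite{mdsafe1} as background, so there is no ``paper's own proof'' to compare against. Your sketch follows the expected route---reduce circular-ones to consecutive-ones via row complementation (zero out a chosen column), then invoke Tucker---and this is indeed how the result is obtained in the cited source.

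That said, as a proof sketch your sufficiency argument has a real gap. After passing to a minimal $M$, you write ``choose $a$ so that a Tucker submatrix $T$ in $a\miop M$ uses all rows of $M$,'' but you have not argued that such an $a$ exists. Minimality of $M$ for the \emph{circular}-ones property tells you that deleting any row leaves a matrix with circular-ones, i.e.\ \emph{some} complementation of the row-deleted matrix has consecutive-ones; it does not say that for your particular $a$ the Tucker obstruction must touch every row. The actual argument has to track more carefully how the extra column in $\MIast{k}$ arises and why $\MII{k}$ and $\MIII{k}$ always collapse, and the case $T=\MI{k}$ (which you omit from your case split) needs its own treatment. Your final paragraph deriving the closure statement from minimality is correct once the characterization and minimality are in hand.
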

	\subsection{Semi-Transitive Orientability of Split Graphs}
	A \textit{semi-transitive} orientation is an acyclic orientation such that, for any directed path $v_0 \rightarrow v_1 \rightarrow \cdots \rightarrow v_m$ with $m \ge 2$, either there is no arc between $v_0$ and $v_m$, or for all $1 \le i < j \le m$, there exists an arc from $v_i$ to $v_j$. An induced subgraph on at least four vertices $\{v_0, v_1, \ldots, v_k\}$ of an oriented graph is called a \textit{shortcut} if it is acyclic, non-transitive, and contains both the directed path $v_0 \rightarrow v_1 \rightarrow \cdots \rightarrow v_k$ and the arc $v_0 \rightarrow v_k$, which is called the \textit{shortcutting edge}. A semi-transitive orientation can thus be equivalently defined as an acyclic, shortcut-free orientation. An undirected graph is said to be semi-transitive if it admits a semi-transitive orientation. The class of semi-transitive graphs is hereditary.
	
	Semi-transitive orientability of split graphs are recently studied in the papers \cite{kitaev2008word, iamthong2021semi, iamthong2022word, kitaev2024semi, chen2022representing}. Some split graphs are semi-transitive and some are not. Given a split graph $G$ with $C = \{u_1, u_2, \ldots, u_n\}$ and $I = \{v_1, v_2, \ldots, v_m\}$, consider a $(0,1)$-matrix $A(G)$ of size $m \times n$, where $a_{ij} = 1$ if and only if $v_i$ is adjacent to $u_j$. In \cite{kitaev2024semi}, Kitaev and Pyatkin have characterized the semi-transitive split graphs using following theorem.
	\begin{theorem}[\cite{kitaev2024semi}, Theorem 3]
		A split graph $G$ is semi-transitive if and only if
		\begin{itemize}
			\item $A(G)$ has circular-ones property for rows.
			\item If a row of $A(G)$ has the form $1^a 0^b 1^c$
			where $a + b + c = k$ and $a, b, c \geq 1$, then no other row
			may contain ones in all positions from $a$ to $a + b + 1$.
		\end{itemize}
	\end{theorem}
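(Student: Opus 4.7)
The plan is to prove the biconditional in two directions, both turning on how a semi-transitive orientation must treat each vertex $v \in I$ relative to the linearly-ordered clique $C$.

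\textbf{Necessity.} Suppose $G$ admits a semi-transitive orientation $\mathcal{O}$. Since $\mathcal{O}$ restricted to $C$ is an acyclic tournament, it induces a linear order; I relabel so that $u_i \to u_j$ in $\mathcal{O}$ exactly when $i < j$. The crucial preliminary statement is the \emph{gap lemma}: if $v \in I$ is adjacent to $u_a$ and $u_c$ but not to some $u_b$ with $a < b < c$, then $\mathcal{O}$ forces $u_a \to v$ and $v \to u_c$. One verifies this by checking the four orientations of the pair $(vu_a, vu_c)$: $v \to u_a$ together with $u_c \to v$ yields the cycle $v \to u_a \to u_c \to v$; $u_a \to v$ with $u_c \to v$ produces the $4$-vertex shortcut $u_a \to u_b \to u_c \to v$ (shortcut edge $u_a \to v$, missing edge $u_b v$); symmetrically $v \to u_a$ with $v \to u_c$ gives the shortcut $v \to u_a \to u_b \to u_c$. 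Iterating this over all pairs in $N(v) \cap C$ shows that each row of $A(G)$ has at most two maximal $1$-blocks and, when it has two, both lie at the ends of the row (otherwise the forced orientations clash on a common intermediate clique vertex belonging to two distinct gaps). Hence each row is of the form $0^\ast 1^\ast 0^\ast$ or $1^a 0^b 1^c$, both of which are circular intervals of $u_1 \prec \ldots \prec u_n$, establishing condition~(i). For condition~(ii), a row $v$ of form $1^a 0^b 1^c$ with $a, b, c \geq 1$ forces $u_a \to v \to u_{a+b+1}$; if another row $w$ is $1$ throughout $[a, a+b+1]$, I would enumerate the acyclic orientations of the three edges $wu_a$, $wu_{a+1}$, $wu_{a+b+1}$ and in each case exhibit a $4$-vertex shortcut, most directly $u_a \to v \to u_{a+b+1} \to w$ (when $u_{a+b+1} \to w$, missing edge $vw$), and otherwise a path of the form $u_{?} \to u_a \to w \to u_{a+b+1}$ or $u_a \to w \to u_{a+b+1} \to u_{?}$ whose missing edge reflects the fact that $w$, being an independent vertex, must fail to be adjacent to some clique vertex outside $[a, a+b+1]$.

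\textbf{Sufficiency.} Assume $A(G)$ satisfies~(i) and~(ii). Fix a circular-ones ordering of the clique columns, linearize it, and orient the clique as $u_i \to u_j$ for $i < j$. For each $v \in I$ whose row is of consecutive form $0^\ast 1^b 0^\ast$, orient every edge of $v$ as incoming; for each $v$ of wrap-around form $1^a 0^b 1^c$, orient $u_1, \ldots, u_a \to v$ and $v \to u_{a+b+1}, \ldots, u_n$, as dictated by the gap lemma. Acyclicity is immediate from the prefix/suffix structure of each $v$'s in- and out-neighborhoods. For shortcut-freeness, any candidate shortcut $v_0 \to \ldots \to v_k$ is classified by the number of $I$-vertices on its directed path. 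Paths entirely in $C$ are transitive by clique linearity. If exactly one $I$-vertex $v$ lies on the path, an interval argument shows that all other path vertices must lie in $N(v) \subseteq C$, so the induced subgraph on $\{v_0, \ldots, v_k\}$ is complete and hence transitive. If two $I$-vertices $v, w$ lie on the path, they are non-adjacent (so $vw$ is the natural candidate missing edge), and the path can exist only in the configuration where one of them, say $v$, is of wrap-around form $1^a 0^b 1^c$ and the other contributes a clique-neighborhood containing all of $[a, a+b+1]$---precisely the configuration ruled out by condition~(ii).

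\textbf{Main obstacle.} The heart of the argument is the two-$I$-vertex case in the sufficiency direction, paired with the corresponding tightness argument for condition~(ii) in necessity. Circular-ones controls each row individually, but shortcuts couple two rows through their common clique neighborhood, so one must show that condition~(ii) captures \emph{exactly} the problematic overlaps: every forbidden overlap produces a shortcut, and every allowed one avoids it. This demands a careful case analysis organized by the relative positions of the two rows' $1$-blocks, with special attention to the degenerate parameter values $a = 0$, $c = 0$, or $b = 0$, in which the pattern $1^a 0^b 1^c$ collapses to a consecutive pattern and condition~(ii) becomes vacuous.
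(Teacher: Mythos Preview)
This theorem is not proved in the paper at all: it is quoted as a preliminary result from Kitaev and Pyatkin~\cite{kitaev2024semi} (their Theorem~3) and is used as a black box. There is therefore no ``paper's own proof'' to compare your proposal against.

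That said, your proposal follows the natural line of attack (and, as far as one can tell, the one Kitaev and Pyatkin use): linearize the clique by the acyclic tournament order, prove the gap lemma, and deduce the circular shape of each row; then for sufficiency, orient consecutive-type independent vertices as sinks and wrap-around vertices by the forced prefix/suffix split. Two places in your sketch need tightening before it counts as a proof. First, in the necessity of condition~(ii), you write ``I would enumerate the acyclic orientations of the three edges $wu_a$, $wu_{a+1}$, $wu_{a+b+1}$'' and then hand-wave the remaining cases with ``a path of the form $u_{?} \to u_a \to w \to u_{a+b+1}$ or \ldots''. That enumeration has to actually be carried out, and it must invoke the already-established circular shape of $w$'s row together with the standing hypothesis that $w$ is not adjacent to all of $C$; otherwise you cannot always locate the required missing edge. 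Second, in the sufficiency direction your two-$I$-vertex shortcut case only treats the situation where both $v$ and $w$ are interior to the directed path and both are of wrap-around type; you also need to dispose of the cases where one of them is a sink (consecutive-type) and sits at the end of the path, or where one is an endpoint of the shortcut path, and verify that no shortcut arises there either. None of these is a fatal obstruction---each resolves by the same interval bookkeeping you have set up---but the proposal as written is a plan, not yet a proof.
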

	\section{The \(I\)-Circular Property}
	
	In the previous section, we reviewed the necessary preliminaries on graph theory, matrix properties, and semi-transitive orientations. Building on these foundations, we now introduce the \(I\)-circular property, which plays a central role in our characterization of minimal forbidden induced subgraphs for semi-transitive split graphs. The \(I\)-circular property is a matrix property closely related to the well-known \(D\)-circular property introduced by Safe~\cite{mdsafe2}. In this section, we formally define the \(I\)-circular property and establish a minimal forbidden submatrix characterization for it.
	
	 \begin{definition}
		A matrix $M$ has the $I$-circular property if there is a linear order
		$\preccurlyeq_C$ of its columns such that each row of $M$ is a circular interval of $\preccurlyeq_C$ and the set intersection $s \cap r$ is also a circular interval of $\preccurlyeq_C$ for each pair of rows $r$ and $s$ of $M$.
		If so, $\preccurlyeq_C$ is a $I$-circular order of $M$.
	\end{definition}
	\begin{definition}
		We denote by $\Lambda$ an operator that assigns to each matrix $M$ some matrix $\Lambda(M)$ that arises from $M$ by adding rows at the bottom as follows: for each	pair of nontrivial rows $r$ and $s$ such that complement of $r$ is properly contained in $s$, add a row equal to the intersection of rows $r$ and $s$. We do not specify the order in which these rows are added
		because it is immaterial for our purposes.
	\end{definition}
	\begin{lemma}\label{IM} A matrix $M$ has the $I$-circular property if and only if $\Lambda(M)$ has the circular-ones property.\end{lemma}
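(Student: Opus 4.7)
The plan is to show that a single linear order $\preccurlyeq_C$ of the columns can simultaneously witness both properties, so that the equivalence reduces to analyzing which row--intersections of $M$ are automatically circular intervals once the rows of $\Lambda(M)$ are.

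For the forward direction, suppose $\preccurlyeq_C$ is an $I$-circular order of $M$. Then every row of $M$ and every pairwise intersection $r \cap s$ of rows of $M$ is a circular interval of $\preccurlyeq_C$. Since the rows of $\Lambda(M)$ are precisely the rows of $M$ together with certain intersections of pairs of its rows, every row of $\Lambda(M)$ is a circular interval of $\preccurlyeq_C$, so $\preccurlyeq_C$ is a circular-ones order of $\Lambda(M)$.

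For the converse, let $\preccurlyeq_C$ be a circular-ones order of $\Lambda(M)$. Each row of $M$ is also a row of $\Lambda(M)$, hence a circular interval; it remains to show that $r \cap s$ is a circular interval of $\preccurlyeq_C$ for every pair of rows $r, s$ of $M$. If either $r$ or $s$ is trivial, then $r \cap s$ is either empty or equals the other row, so there is nothing to check. Assume both are nontrivial. If $\overline r \subsetneq s$ or $\overline s \subsetneq r$, then by definition of $\Lambda$ the row $r \cap s$ was explicitly added to $\Lambda(M)$, and is therefore a circular interval of $\preccurlyeq_C$.

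The main obstacle is the remaining subcase, in which both rows are nontrivial yet neither $\overline r \subsetneq s$ nor $\overline s \subsetneq r$. I plan to handle it by observing that this either forces $s = \overline r$, in which case $r \cap s = \emptyset$ is trivially a circular interval, or guarantees the existence of a column $c$ with $r_c = s_c = 0$. In the latter case, cutting the circular order at $c$ produces a linear order of the columns in which both $r$ and $s$ are ordinary linear intervals, since neither contains $c$; the intersection of two linear intervals is again a linear interval, and so corresponds to a circular interval of the original $\preccurlyeq_C$. Combining all cases shows that $\preccurlyeq_C$ is an $I$-circular order of $M$, completing the equivalence.
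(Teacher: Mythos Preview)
Your proof is correct and follows essentially the same route as the paper's: both argue that a single order $\preccurlyeq_C$ witnesses both properties, handle trivial rows separately, and for nontrivial rows invoke that $r\cap s$ is an explicit row of $\Lambda(M)$ precisely in the case $\overline r\subsetneq s$. The only difference is that the paper asserts without justification that $r\cap s$ is automatically a circular interval in the remaining case, whereas you supply the argument (common zero column, linearize there); this extra detail is sound and makes the step self-contained.
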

	\begin{proof} By definition, each $I$-circular order of $M$ is a circular-ones order of $\Lambda(M)$. Hence, if $M$ has the $I$-circular property, then $\Lambda(M)$ has the circular-ones property. For the converse, suppose that $\Lambda(M)$ has the circular-ones property. Thus, there is some circular-ones order $\preccurlyeq_C$ of $\Lambda(M)$. Let $r$ and $s$ be two rows of $M$. As $r$ and $s$ are rows of $\Lambda(M)$, then $r$ and $s$ are circular intervals of $\preccurlyeq_C$. Hence, $s\cap r$ is also a circular interval of $\preccurlyeq_C$ unless, perhaps, when complement of $r$ is properly contained in $s$.
		
		 If $r$ or $s$ is trivial, then $s\cap r$ is trivial, $s$, or $r$, all of which are circular intervals of $\preccurlyeq_C$. Thus, we assume, without loss of generality, that $r$ and $s$ are nontrivial. Therefore, if the complement of $r$ is properly contained in $s$, then $s\cap r$ is a circular interval of $\preccurlyeq_C$ because $s \cap r$ is a row of $\Lambda(M)$. We conclude that in all cases $s\cap r$ is a circular interval of $\preccurlyeq_C$. Thus, by definition, $\preccurlyeq_C$ is a $I$-circular order of $M$. This proves that, whenever $\Lambda(M)$ has the circular-ones property, $M$ has the $I$-circular property. The proof of the lemma is complete.\end{proof}
		 
		 \begin{figure}[h]
		 			\[ \MVI = \begin{pmatrix}
		 				1 &  1 & 0 & 1\\
		 				0  & 1 & 1 & 1\\
		 				1 & 0 & 1 & 1\\
		 			\end{pmatrix}\]
		 			\caption{$\MVI$}
		 \end{figure}
	
	The set of minimal forbidden submatrices in the characterisation for the \(I\)-circular property is the set $\IForbRow$ defined as follows.
	\begin{definition}
		We denote \[ \IForbRow=\{\MIast k, \MIII{k}, \MII{k+1}\colon\,k\geq 3\}\cup\{0101~\miop \MIast{4}, 0100~\miop\MII{4}, \MIV,\MV,\MVI\}, \]
	\end{definition}
	 We now state the following lemmas, which help us understand the relationship between the set \(\IForbRow\) and the set \(\ForbRow\).
	\begin{lemma}\label{m2}
		Let $a = a_1 a_2 \ldots a_{k-2}~0~0$ be a binary sequence of length $k$ for some $k \ge 4$. If $a \not = 0100$, then $a \miop \MII{k}$ contains some matrix in $\IForbRow$ as a configuration.
	\end{lemma}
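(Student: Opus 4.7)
My plan is to argue by case analysis on the set $S = \{i \in [k-2] : a_i = 1\}$ of staircase rows that get complemented, using the fact that $a_{k-1} = a_k = 0$ leaves the two dense rows $k-1$ and $k$ of $\MII{k}$ unchanged, while each $i \in S$ transforms the staircase row $i$ (originally of weight $2$) into a dense row whose $1$-positions are $[k] \setminus \{i, i+1\}$.

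The trivial case $S = \emptyset$ gives $a \miop \MII{k} = \MII{k}$, which lies in $\IForbRow$ since $k \geq 4$. For the base case $k = 4$, the only sequences compatible with the hypothesis (other than $0100$ and $0000$) are $1000$ and $1100$; a direct calculation using the column permutation $(3, 4, 1, 2)$ together with the transposition of rows $3$ and $4$ shows that $1000 \miop \MII{4}$ contains $0100 \miop \MII{4}$ and that $1100 \miop \MII{4}$ contains $\MII{4}$ as configurations, both of which are in $\IForbRow$.

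For $k \geq 5$, I split into sub-cases according to the shape of $S$. When $S = \{1\}$, I restrict to rows $1, 2, \ldots, k-1$ and columns $2, 3, \ldots, k$: after cycling the complemented row $1$ to the last position, the restricted matrix is exactly $\MII{k-1}$, which lies in $\IForbRow$ since $k-1 \geq 4$. A symmetric argument handles $S = \{k-2\}$ using rows $(1, \ldots, k-2, k)$ and columns $(1, \ldots, k-2, k)$. When $S = \{j\}$ for some $1 < j < k-2$, I instead exhibit $\MVI$: choosing as surviving columns the two extremes $1$ and $k-1$, together with one of $\{j, j+1\}$ and any column $x \in \{2, \ldots, k-2, k\} \setminus \{j, j+1\}$, yields a $3 \times 4$ submatrix on rows $j$ (complemented), $k-1$, and $k$ that equals $\MVI$ up to row permutation, because in these four columns each of the three dense rows has weight exactly $3$ and they share precisely one common column. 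For $|S| \geq 2$, the same principle applies: the multiple complemented dense rows, together with the untouched dense rows $k-1$ and $k$, supply enough thick rows to locate an $\MVI$-configuration, a smaller $\MII{k'}$, or (when all staircase rows are complemented) $\MII{k}$ itself via a suitable column permutation.

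The main technical obstacle will be the bookkeeping in the middle sub-case $1 < j < k-2$ and its multi-element analogues: one must choose the deleted columns so that (i) the common column of the selected dense rows survives, and (ii) each selected row has weight exactly $3$ in the four surviving columns. Because a configuration preserves not only row and column sums but the full bipartite incidence structure, verifying each exhibited map reduces to checking that the pairwise row intersections of the restricted submatrix agree entry-by-entry with those of the target element of $\IForbRow$, which is routine but requires careful index-tracking.
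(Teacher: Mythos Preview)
Your approach is essentially the paper's: case analysis on which staircase rows are complemented, extracting $\MVI$ from three dense rows when an interior index is flipped, and peeling off to $\MII{k-1}$ when only an endpoint index is flipped. The $k=4$ base case and the cases $S=\emptyset$, $S=\{1\}$, $S=\{k-2\}$, $S=\{j\}$ with $1<j<k-2$ are all handled correctly and match the paper's maps up to cosmetic differences.

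The gap is in your treatment of $|S|\geq 2$, which is only a sketch. The crucial observation you are missing is that your $\MVI$ argument for $S=\{j\}$ with $1<j<k-2$ uses \emph{only} rows $j$, $k-1$, $k$, so it applies verbatim whenever $S$ contains \emph{any} interior index $2\leq j\leq k-3$, regardless of what else is in $S$. Once you note this, the only residual $|S|\geq 2$ case for $k\geq 5$ is $S=\{1,k-2\}$, and this still needs an explicit argument: for $k\geq 6$ one restricts to rows $2,\ldots,k-3,1$ and columns $2,\ldots,k-2,k$ to obtain $\MII{k-2}$, while for $k=5$ one must instead exhibit $0100\miop\MII{4}$ (since $\MII{3}$ is not defined). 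Your proposal does not supply these maps. Also, your parenthetical claim that complementing \emph{all} staircase rows yields $\MII{k}$ via a column permutation is false (the row weights become $k-2$ rather than $2$, so no permutation can work); fortunately that case contains an interior index and is already covered by the $\MVI$ argument, so the error is harmless once the observation above is in place.
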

	\begin{proof}
	Let $a = a_1 a_2 \ldots a_{k-2}\,0\,0$ be a binary sequence of length $k$ for some $k \geq 4$. If $a_i = 0$ for all $1 \leq i \leq k-2$, then $a \miop \MII{k} = \MII{k}$. Now suppose that $a_i = 1$ for some $2 \leq i \leq k-3$. It can be verified that $(a \miop \MII{k})_{<i,\, k-1,\, k>,\, <k-1,\, 1,\, i,\, k>} = \MVI.$
	
	Next, suppose that $a_i = 0$ for all $2 \leq i \leq k-3$, and that either $a_1 = 1$ or $a_{k-2} = 1$. 
	
	- If $a_1 = 1$ and $a_{k-2} = 0$, then
	\[
	(a \miop \MII{k})_{<2,\, 3,\, \ldots,\, k-1,\, 1>,\, <2,\, 3,\, \ldots,\, k>} = \MII{k-1}.
	\]
	
	- If $a_1 = 0$ and $a_{k-2} = 1$, then
	\[
	(a \miop \MII{k})_{<1,\, 2,\, 3,\, \ldots,\, k-2,\, k>,\, <1,\, 2,\, 3,\, \ldots,\, k-2,\, k>} = \MII{k-1}.
	\]
	
	- If $a_1 = a_{k-2} = 1$ and $k \geq 6$, then it can be verified that
	\[
	(a \miop \MII{k})_{<2,\, 3,\, \ldots,\, k-2,\, 1>,\, <2,\, 3,\, \ldots,\, k-2,\, k>} = \MII{k-2}.
	\]
	
	- If $a_1 = a_{k-2} = 1$ and $k = 5$, then it can be verified that
	\[
	(10100 \miop \MII{5})_{<2,\, 1,\, 4,\, 3>,\, <3,\, 2,\, 1,\, 4>} = 0100 \miop \MII{4}.
	\]
	Now, the only remaining cases are when $a = 1100$ and $a = 1000$:
	
	- If $a = 1100$, then 
	\[
	(1100 \miop \MII{4})_{<1,\, 2,\, 4,\, 3>,\, <3,\, 4,\, 1,\, 2>} = \MII{4}.
	\]
	
	- If $a = 1000$, then 
	\[
	(1000 \miop \MII{4})_{<2,\, 1,\, 4,\, 3>,\, <3,\, 2,\, 1,\, 4>} = 0100 \miop \MII{4}.
	\]
	
	This completes the proof of the lemma.
	
	\end{proof}
	\begin{lemma}\label{L1}
		If $F \in \IForbRow$, then $\Lambda(F)$ contains some matrix in $\ForbRow$ as a configuration. Moreover, $F$ is a minimal forbidden submatrix for $I$-circular property.
	\end{lemma}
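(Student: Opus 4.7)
The plan is to deduce both statements from Lemma~\ref{IM} via a case analysis over the finitely many families in $\IForbRow$.

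For the configuration claim, I would treat each family in turn. The matrices $\MIast k$ and $\MIV$ already belong to $\ForbRow$, so they occur in $\Lambda(F)$ trivially, since $\Lambda$ only appends rows. For the remaining families I would compute $\Lambda(F)$ explicitly and exhibit a row map and a column map that pick out a member of $\ForbRow$. For instance, in $\Lambda(\MVI)$ each of the three pairwise row intersections contains column~$4$ and otherwise selects one of columns $1,2,3$, producing a $3\times 4$ configuration equivalent to $111\miop\MIast 3$. In $\Lambda(\MV)$ the only appended row is $\{1,4\}$, and selecting it together with rows $1,3,4$ of $\MV$ yields, after one row complementation, $0100\miop\MIast 4$, which by Theorem~\ref{circr} represents the same configuration as a member of $\ForbRow$. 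Analogous but longer explicit computations handle $\MII{k+1}$, $\MIII k$, $0101\miop\MIast 4$, and $0100\miop\MII 4$: in each case the intersection rows added by $\Lambda$ combine with the original diagonal pattern to embed either some $\MIast k$ or an instance of $111\miop\MIast 3$.

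For minimality, I would use Lemma~\ref{IM} to reduce to showing that $\Lambda(F')$ has the circular-ones property for every proper submatrix $F'$ of $F$. The structural observation making this tractable is that $\Lambda(F')$ is always a submatrix of $\Lambda(F)$: deleting a column of $F$ restricts every row of $\Lambda(F)$ to the same column subset, while deleting a row $r$ of $F$ removes $r$ along with every intersection row of the form $r\cap s$ and creates no new intersections, because the membership condition $\overline{s}\subsetneq t$ depends only on the pair of rows $s,t$. Hence it suffices to verify that each single-row or single-column deletion of $F$ destroys the particular $\ForbRow$-configuration exhibited in the first part of the proof. For the finite families ($\MIV,\MV,\MVI$, the two complemented matrices, and the base cases at small $k$) this is a routine check that produces, for each deletion, an explicit circular-ones order of $\Lambda(F')$. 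For the parametric families $\MIast k$, $\MII{k+1}$, and $\MIII k$ with $k$ large, I would proceed by induction on $k$: deleting a row or column either reduces the matrix to the next smaller member of the same family, handled by the inductive hypothesis together with the classification of $\ForbRow$, or yields a matrix with the consecutive-ones property, which trivially has the circular-ones property.

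The main obstacle will be the minimality analysis for the parametric families, where one has to track carefully how removing each of the special "bottom" rows of $\MII{k+1}$ or $\MIII k$ alters both the set of intersection rows appended by $\Lambda$ and the identity of the embedded forbidden configuration. A secondary difficulty arises when the deletion targets the all-zero column of $\MIast k$ (or an analogous trivial column), where one must rule out the accidental appearance of a different $\ForbRow$-configuration in $\Lambda(F')$ and, when necessary, rely on Lemma~\ref{m2} to identify the resulting configuration with a member of $\IForbRow$ of strictly smaller size.
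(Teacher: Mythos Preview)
Your plan for the first claim matches the paper: a case analysis over $\IForbRow$, exhibiting in each case explicit row and column maps that embed a member of $\ForbRow$ in $\Lambda(F)$. The computations you sketch for $\MVI$ and $\MV$ agree with the paper's. One simplification you miss: the paper observes that $\Lambda(\MIII k)=\MIII k$ as well (no pair of its rows satisfies the containment condition), and since $\MIII k$ represents the same configuration as a matrix in $\ForbRow$, it joins $\MIast k$ and $\MIV$ in the trivial bucket rather than requiring further work.

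Your minimality argument, however, rests on a false structural claim. The assertion that $\Lambda(F')$ is always a submatrix of $\Lambda(F)$ is correct when $F'$ comes from $F$ by deleting a row, but fails for column deletions: removing a column can create \emph{new} pairs $(r,s)$ with $\overline r\subsetneq s$ that did not satisfy this in $F$, so $\Lambda(F')$ acquires intersection rows absent from $\Lambda(F)$. Concretely, take $F=\MIast 3$ and delete the all-zero column to obtain $F'=\MI 3$; then $\Lambda(F)=F$ has three rows, but in $F'$ every ordered pair of rows now satisfies the containment, so $\Lambda(F')$ has six rows and is not a submatrix of $\Lambda(F)$ restricted to those columns. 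Hence the step ``it suffices to verify that each deletion destroys the particular $\ForbRow$-configuration'' is unjustified. Even were the submatrix claim true, it would not by itself suffice: $\Lambda(F)$ might contain other minimal forbidden submatrices for the circular-ones property, and you have not argued uniqueness.

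The paper's treatment of minimality is different and more direct (though also terse): for the families with $\Lambda(F)=F$ it invokes the minimality of $F$ for the circular-ones property from Theorem~\ref{circr}; for the remaining families it checks that the exhibited $\ForbRow$-configuration inside $\Lambda(F)$ uses every row and every column of $F$. Your stated fallback of producing an explicit circular-ones order for $\Lambda(F')$ after each single deletion is the right repair, but it must be carried out independently of the submatrix claim, in particular for every column deletion.
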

	\begin{proof}
	If $F = \MIast{k}$ for some $k \geq 3$, or if $F = \MIII{k}$ for some $k \geq 3$, or if $F = \MIV$, then the lemma holds immediately, as $\Lambda(F) = F$ and $F \in \ForbRow$. Moreover, since, by \textit{Theorem~\ref{circr}}, $\Lambda(F)$ is a minimal forbidden submatrix for the circular-ones property, $F$ is a minimal forbidden submatrix for the $I$-circular property.
	
	If $F = 0101 \miop \MIast{4}$, then $\Lambda(F)_{<1, 2, 3, 4>, id_5} = F$ and $F \in \ForbRow$. Furthermore,\\ $\Lambda(F)_{<1, 2, 3, 4>, id_5}$ is the only submatrix of $\Lambda(F)$ that lacks the circular-ones property. By \textit{Theorem~\ref{circr}}, $\Lambda(F)$ is minimal for the circular-ones property, and thus $F$ is minimal for the $I$-circular property.
	
	Now consider the remaining cases. For each, note that every row and column of $F$ is required to obtain a submatrix of $\Lambda(F)$ that has the same configuration as some matrix in $\ForbRow$ and is also minimal for the circular-ones property by \textit{Theorem~\ref{circr}}. Thus, in each case, $F$ itself is minimal for the $I$-circular property.
	\begin{itemize}
		\item If $F = \MII{k}$ for some $k \geq 4$, let row $k+1$ of $\Lambda(F)$ be obtained by intersecting rows $k-1$ and $k$ of $F$. With $\rho = <1, 2, \ldots, k-2, k+1>$, we have $\Lambda(F)_{\rho, id_k} = 00\ldots01 \miop \MIast{k-1}$.
		\item If $F = \MV$, $\Lambda(F)$ adds a sixth row from rows $2$ and $4$ of $F$, and $\Lambda(F)_{<1, 4, 3, 6>, id_5} = 0100 \miop \MIast{4}$.
		\item If $F = 0100 \miop \MII{4}$, $\Lambda(F)$ adds a fifth row from rows $3$ and $4$, and $\Lambda(F)_{<1, 2, 6>, id_4} = \MIast{3}$.
		\item If $F = \MVI$, $\Lambda(F)$ adds rows $4$, $5$, and $6$ from intersections of rows $1$ and $2$, $1$ and $3$, and $2$ and $3$, respectively, yielding $\Lambda(F)_{<4, 5, 6>, id_4} = 111 \miop \MIast{3}$.
	\end{itemize}
	This completes the proof that $F \in \IForbRow$ is a minimal forbidden submatrix for the $I$-circular property in all cases.

	\end{proof}
	\begin{corollary}\label{NFIM}
		None of the matrices in $\IForbRow$ has the $I$-circular property
	\end{corollary}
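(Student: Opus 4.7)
The plan is to deduce this as an immediate corollary of the tools already assembled, with essentially no new work. Given any $F \in \IForbRow$, I would first invoke \textit{Lemma \ref{L1}} to obtain a configuration of some member of $\ForbRow$ inside $\Lambda(F)$. By \textit{Theorem \ref{circr}}, this forces $\Lambda(F)$ to fail the circular-ones property. The contrapositive of \textit{Lemma \ref{IM}} then yields that $F$ itself cannot have the $I$-circular property, which is precisely the statement of the corollary.

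I do not expect any genuine obstacle here, because all of the case analysis has already been absorbed into \textit{Lemma \ref{L1}}, where an explicit submatrix of $\Lambda(F)$ belonging to $\ForbRow$ was exhibited for each family $\MIast{k}$, $\MIII{k}$, $\MII{k+1}$ and for each of the sporadic matrices $0101 \miop \MIast{4}$, $0100 \miop \MII{4}$, $\MIV$, $\MV$, and $\MVI$. In fact, \textit{Lemma \ref{L1}} states the stronger assertion that every $F \in \IForbRow$ is a \emph{minimal} forbidden submatrix for the $I$-circular property, and the present corollary is just the weaker half of that conclusion. The proof therefore collapses to the short chain of implications \textit{Lemma \ref{L1}} $\Rightarrow$ \textit{Theorem \ref{circr}} $\Rightarrow$ contrapositive of \textit{Lemma \ref{IM}}, with no further computation required.
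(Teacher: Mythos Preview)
Your proposal is correct and follows exactly the same chain of implications as the paper's own proof: \textit{Lemma~\ref{L1}} gives a $\ForbRow$-configuration inside $\Lambda(F)$, \textit{Theorem~\ref{circr}} then rules out the circular-ones property for $\Lambda(F)$, and \textit{Lemma~\ref{IM}} (via its contrapositive) finishes. Your additional remark that this is the ``weaker half'' of the minimality assertion in \textit{Lemma~\ref{L1}} is also accurate.
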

	\begin{proof}
	Let $F$ be any matrix in $\IForbRow$. By \textit{Lemma}~\ref{L1}, $\Lambda(F)$ contains some matrix in $\ForbRow$ as a configuration. Thus, by \textit{Theorem}~\ref{circr}, $\Lambda(F)$ does not have the circular-ones property. Hence, \textit{Lemma}~\ref{IM} implies that $F$ does not have the $I$-circular property. This completes the proof of the corollary.
	\end{proof}
	\begin{lemma}\label{fc}
		If $F \in \ForbRow$, then $F$ contains some matrix in $\IForbRow$ as a configuration.
	\end{lemma}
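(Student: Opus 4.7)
The plan is to proceed by case analysis on $F \in \ForbRow$.

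The four sporadic matrices are handled directly: $F = \MIV$ lies in $\IForbRow$, and $F = \MVast$ contains $\MV \in \IForbRow$ after deleting the zero column appended in forming $\MVast$. For $F = \overline{\MIV}$ and $F = \overline{\MVast}$, I would exhibit explicit row and column maps witnessing a small member of $\IForbRow$ (natural candidates being $\MVI$, $\MII{4}$, or $0100 \miop \MII{4}$); these reduce to finite computations on these fixed matrices.

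The substantive case is $F = a \miop \MIast{k}$ for a bracelet $a \in A_k$, $k \geq 3$. Immediate sub-cases are $a = 0^k$ (giving $F = \MIast{k} \in \IForbRow$) and $(k,a) = (4, 0101)$ (giving $F \in \IForbRow$). For the remaining bracelets I would exploit a symmetry of $\MIast{k}$: simultaneously applying a cyclic shift to the first $k$ rows and the first $k$ columns of $\MIast{k}$ leaves $\MIast{k}$ invariant, and the same holds for the reversal; consequently $a$ may be replaced by any representative of its bracelet equivalence class without altering the configuration class of $a \miop \MIast{k}$. With this in hand, I would split into two regimes. If $a$ has two cyclically adjacent zeros, shift so that $a_{k-1} = a_k = 0$ and invoke a helper lemma, analogous to Lemma~\ref{m2} but applied to $\MIast{k}$, stating that every such $a \ne 0^k$ yields $a \miop \MIast{k}$ containing one of $\MIII{k'}$, $\MII{k'+1}$, $0100 \miop \MII{4}$, $0101 \miop \MIast{4}$, or $\MVI$ as a configuration; the proof would run by sub-case analysis on the distribution of $1$'s in the prefix $a_1 \cdots a_{k-2}$, with isolated $1$'s producing $\MIII{k'}$, adjacent pairs producing $\MII{k'+1}$, and more complex patterns producing $\MVI$ or one of the tagged small matrices. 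If $a$ has no two cyclically adjacent zeros---which covers $a = 1^k$, the alternating bracelet $(01)^{k/2}$ for even $k \geq 6$, and certain sparse patterns---the witness is extracted directly: $\overline{\MIast{3}}$ equals $\MIII{3}$ as a configuration, $\overline{\MIast{k}}$ for $k \geq 4$ contains $\MII{k}$, and for the remaining sparse bracelets three appropriately chosen complemented rows form a copy of $\MVI$ after dropping suitable columns.

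The main obstacle is the bracelet case, where many distinct $1$-patterns of $a$ must be handled uniformly. The helper lemma for the adjacent-zeros regime is the technical heart of the argument, playing the same role for the $\MIast{k}$ family as Lemma~\ref{m2} does for the $\MII{k}$ family; its proof would mirror that of Lemma~\ref{m2} by reducing large instances to smaller ones through nested substructures and resolving the base cases by explicit row and column maps. Once this lemma is established, every bracelet in that regime is reduced to a finite explicit list of configurations in $\IForbRow$, while the remaining bracelets are dispatched by the direct extractions described above.
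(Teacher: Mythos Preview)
Your overall plan---case analysis on the sporadic matrices and on $F=a\miop\MIast{k}$, using the rotational/reversal symmetry of $\MIast{k}$ to normalise $a$---is sound and is essentially how the paper proceeds as well. The paper organises the bracelet case differently, though: rather than your dichotomy ``$a$ has two cyclically adjacent zeros / it does not'', it scans $a$ for specific circular substrings ($1x11$, $11x1$, $10101$, $10^m1$ with $m\ge 2$, a single $1$, exactly two $1$'s, $111$), and for each pattern writes down an explicit $\rho,\sigma$ producing $\MVI$, $\MII{k'}$, or $\MIII{k}$. This avoids the need for a separate helper lemma analogous to Lemma~\ref{m2}; your helper-lemma route is viable but does extra work that the paper's direct pattern-matching sidesteps.

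There is one concrete gap in your ``no adjacent zeros'' regime. You assert that for every non-$1^k$ bracelet in that regime three complemented rows yield $\MVI$. This fails for $k=4$, $a=0111$: in $0111\miop\MIast{4}$ the only rows of weight $3$ are rows $2,3,4$, and restricted to those rows only columns $1$ and $2$ have exactly one zero, whereas $\MVI$ requires three such columns; hence $\MVI$ is not a configuration in this matrix. (The paper handles $a=0111$ separately, extracting $0100\miop\MII{4}$ via $\rho=\id_4$, $\sigma=\langle 1,2,3,5\rangle$.) For $k\ge 5$ your claim does go through---one can check that any $a$ with an isolated zero and no two adjacent zeros must contain $1011$ or $10101$ circularly, and either forces $\MVI$---but $k=4$ must be treated as a finite base case. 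Once you add that, your argument is complete and equivalent in strength to the paper's.
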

	\begin{proof}
		Let $F$ be some matrix in $\ForbRow$. If $F$ is $\MIV$, then $F \in \IForbRow$. If $F$ is $\MVast$, then $F_{id_5, <1,2,3,4,5>} = \MV$. If $F$ is $\overline{\MIV}$, then $F_{<2, 3, 4>, <1, 2, 3, 5>} = \MVI$. If $F$ is $\overline{\MVast}$, then $F_{<1, 3, 4>, <2, 3, 5, 6>} = \MVI$. Therefore, it only remains to consider the case where $F = a \miop\MIast{k}$ for some binary sequence $a = a_1 a_2 \ldots a_k$ such that $a \in A_k$. If $a$ is empty, then $F = \MIast{k}$. Therefore, assume that $1$ occurs at least once in $a$.
		
		Suppose that $k \geq 5$ and $1x11$ occurs circularly in $a$ at position $i$ for some $i \in [k]$ and for some $x \in \{0, 1\}$. If we let $\rho = <i, i+2, i+4>$ and $\sigma = <i, i+2, i+4, k+1>$ (where the sums involving $i$ are modulo $k$), then $a_{\rho}  = 111$,
		\[
		F_{\rho, \sigma} = (a \miop \MIast{k})_{\rho, \sigma} = 111 \miop \MIast{k}_{\rho, \sigma},
		\]
		and, consequently, $F_{\rho, \sigma}$ has the same configuration as $\MVI$.
		
		Now suppose that $k \geq 5$ and $11x1$ occurs circularly in $a$ at position $i$ for some $i \in [k]$ and for some $x \in \{0, 1\}$. If we let $\rho = <i, i+1, i+4>$ and $\sigma = <i, i+2, i+4, k+1>$ (where the sums involving $i$ are modulo $k$), then $a_{\rho}  = 111$,
		\[
		F_{\rho, \sigma} = (a \miop \MIast{k})_{\rho, \sigma} = 111 \miop \MIast{k}_{\rho, \sigma},
		\]
		and, consequently, $F_{\rho, \sigma}$ has the same configuration as $\MVI$.
		
		Now suppose that $k \geq 4$ and $1 0^m 1$ occurs circularly in $a$ at position $i$ for some $i \in [k]$ and some $m \geq 2$. If we let $\rho = <i+1, i+2, \ldots, i+m+1, i>$ and $\sigma = <i+1, i+2, \ldots, i+m+1, k+1>$ (where the sums involving $i$ are modulo $k$), then $F_{\rho, \sigma}$ has the same configuration as $\MII{k}$.
		
		Now suppose that $k \geq 5$ and $1 0 1 0 1$ occurs circularly in $a$ at position $i$ for some $i \in [k]$. If we let $\rho = <i, i+2, i+4>$ and $\sigma = <i+1, i+3, i+ 4, k+1>$ (where the sums involving $i$ are modulo $k$), then $F_{\rho, \sigma}$ has the same configuration as $\MVI$.
		
		Now suppose that $k \geq 4$ and $1$ occurs twice in $a$. First, assume that $11$ occurs circularly in $a$ at position $i$ for some $i \in [k]$. If we let $\rho = <i+2, \ldots, k, i, i+1>$ and $\sigma = <i+2, \ldots, k, 1, 2, \ldots, i, k+1>$ (where the sums involving $i$ are modulo $k$), then $F_{\rho, \sigma}$ has the same configuration as $\MII{k}$.	Now assume that $101$ occurs circularly in $a$ at position $i$ for some $i \in [k]$. If we let $\rho = <i+3, i+4, \ldots, k, 1, 2, \ldots, i, i+2>$ and $\sigma = <i+3, i+4, \ldots, k, 1, 2, \ldots, i, k+1>$ (where the sums involving $i$ are modulo $k$), then $F_{\rho, \sigma}$ has the same configuration as $\MII{k-1}$.
		
		Now suppose that $k \geq 4$ and $1$ occurs once in $a$ at position $i$ for some $i \in [k]$. If we let $\rho = <i+1, i+2, \ldots, k, 1, 2, \ldots, i>$ and $\sigma = <i+1, i+2, \ldots, k, 1, 2, \ldots, i, k+1>$ (where the sums involving $i$ are modulo $k$), then $F_{\rho, \sigma}$ has the same configuration as $\MIII{k}$.
		
		Now suppose that $k \ge 5$ and $1$ occurs thrice in $a$. Then, the only case left is when $111$ occurs circularly in $a$ at position $i$ for some $i \in [k]$. If we let $\rho = <i+3, i+4, \ldots, k, 1, 2, \ldots, i>$ and $\sigma = <i+3, i+4, \ldots, k, 1, 2, \ldots, i, k+1>$ (where the sums involving $i$ are modulo $k$), then $F_{\rho, \sigma}$ has the same configuration as $\MII{k-1}$. 
		
		Now suppose that $k = 4$. Since $a$ is a binary bracelet of length $4$, the only cases left are when $a = 0101$ ,$a = 0111$ and $a = 1111$. If $a = 0101$, then $F \in \IForbRow$. If $a = 0111$, then $F_{<1, 2, 3, 4>, <1, 2, 3, 5>} = 0100 \miop \MII{4}$.  If $a = 1111$, then $F_{<1, 2, 3, 4>, <3, 5, 1, 2>} = 0100 \miop \MII{4}$. 
		
		Finally, suppose that $k=3$. By \textit{Theorem }\ref{circr}, we may assume that $a = 111$. Then, it can be verified that $111 \miop\MIast{3} = \MIII{3}$. This completes the proof of the lemma.   
	\end{proof}
	
	We state the following lemma which will be used in the proof of the main result.
	\begin{lemma}\cite{mdsafe1}\label{MVast} If $a$ is any binary sequence of length $4$, then $a\miop\MVast$ represents the same configuration as one of the matrices $\MIV$, $\overline{\MIV}$, $\MVast$, and $\overline{\MVast}$. Conversely, each of the matrices $\MIV$, $\overline{\MIV}$, $\MVast$, and $\overline{\MVast}$ represents the same configuration as $a\miop\MVast$ for some binary sequence $a$ of length $4$. Moreover, the four matrices $\MIV$, $\overline{\MIV}$, $\MVast$, and $\overline{\MVast}$ represent pairwise different configurations.\end{lemma}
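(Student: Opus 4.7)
The plan is a finite case verification over the $2^4 = 16$ binary sequences $a$ of length $4$, combined with invariant-based arguments for pairwise distinctness. First I would exploit the identity $\overline{a} \miop \MVast = \overline{a \miop \MVast}$, which partitions the sixteen sequences into eight complementary pairs; hence it suffices to analyse only eight sequences (say, those with $a_1 = 0$), and the other eight follow by overall complementation. Further symmetries of $\MVast$ under row/column permutations can be used to cut the case count, but the remaining cases are few and directly checkable.

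Second, for each representative sequence $a$, I would compute the multiset of row weights of $a \miop \MVast$: the rows of $\MVast$ have weights $(2, 4, 2, 3)$, so complementing row $i$ sends weight $w_i$ to $6 - w_i$. Comparing the resulting weight multiset with those of the four target matrices quickly narrows down which of $\MIV$, $\overline{\MIV}$, $\MVast$, $\overline{\MVast}$ is the candidate configuration. I would then explicitly exhibit a row map $\rho$ and a column map $\sigma$ with $(a \miop \MVast)_{\rho, \sigma}$ equal to the claimed target. The converse direction is handled simultaneously, since the case analysis produces a specific $a$ realising each of the four configurations; for example, $a = 0000$ gives $\MVast$ and $a = 1111$ gives $\overline{\MVast}$, and the analysis will exhibit sequences landing on $\MIV$ and $\overline{\MIV}$.

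Third, to prove that the four matrices $\MIV$, $\overline{\MIV}$, $\MVast$, $\overline{\MVast}$ represent pairwise different configurations, I would use the multiset of row weights, which is invariant under any row and column permutation. A direct calculation gives
\[
\MVast\colon \{2,2,3,4\},\quad \overline{\MVast}\colon \{2,3,4,4\},\quad \MIV\colon \{2,2,2,3\},\quad \overline{\MIV}\colon \{3,4,4,4\},
\]
and these four multisets are pairwise distinct, so no row/column permutation can carry one of these matrices to another.

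The principal obstacle is organisational rather than conceptual: one must systematically enumerate the chosen representatives and, for each, produce explicit row and column permutations, keeping the bookkeeping coherent with the complementation symmetry and with the invariant-based identification of the target configuration. The row-weight multiset is sufficient to tell the four configurations apart, but it is only a necessary condition within the case analysis, so the explicit permutations are what close each case.
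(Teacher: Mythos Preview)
The paper does not supply its own proof of this lemma; it is quoted from \cite{mdsafe1} and simply stated as a tool for the proof of Theorem~\ref{icp}. So there is no in-paper argument to compare against.

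Your proposal is sound. The complementation symmetry $\overline{a}\miop\MVast=\overline{a\miop\MVast}$ genuinely halves the case count, and the row-weight multiset is a correct configuration invariant that both (i) distinguishes the four target matrices (your computed multisets $\{2,2,3,4\}$, $\{2,3,4,4\}$, $\{2,2,2,3\}$, $\{3,4,4,4\}$ are indeed pairwise distinct) and (ii) pins down the only possible target in each of the eight remaining cases, after which an explicit row/column permutation finishes the verification. You are right to stress that the weight multiset is only necessary inside the case analysis and that the explicit permutations are what close each case; for instance, $a=0001$ yields the same weight multiset $\{2,2,3,4\}$ as $\MVast$, and the column permutation $\langle 2,1,4,3,6,5\rangle$ confirms the configuration equality. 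The converse clause is handled for free, as you note, since the case analysis already exhibits sequences landing on each of the four targets (e.g.\ $a=0100$ gives $\MIV$ and $a=0010$ gives $\overline{\MVast}$).
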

	\subsection{Matrices $Q$ and $R$}
	
	To each quaternary sequence $b$ of length at least $3$, we associate a matrix denoted by $R(b)$. As a preliminary step toward the proof of the Theorem, we establish the following Lemma, which states that for almost all such quaternary sequences $b$, the matrix $R(b)$ contains some matrix in $\IForbRow$ as a configuration. We now introduce the necessary definitions to formalize this.
	
	For each $k\geq 3$ and each $i\in[k]$, we define the following matrices, where in all the cases $i+1$ should be understood modulo $k$:
	\begin{itemize}
		\item $Q_0(i,k)$ is the $1\times(k+1)$ matrix whose only row has $1$'s at columns $i$ and $i+1$ and $0$'s at the remaining ones;
		
		\item $Q_1(i,k)$ is the complement of $Q_0(i,k)$;
		
		\item $Q_2(i,k)$ is the $2\times(k+1)$ matrix whose first row has a $0$ at column $k+1$ and $1$'s at the remaining columns and whose second row has $1$'s at columns $i$, $i+1$, and $k+1$ and $0$'s at the remaining columns;
		
		\item $Q_3(i,k)$ is the $2\times(k+1)$ matrix whose first row has a $0$ at column $i$ and $1$'s at the remaining columns and whose second row has $0$'s at columns $i+1$ and $1$'s at the remaining columns.
	\end{itemize}
	
	Given a quaternary sequence $b = b_1 b_2 \ldots b_k$ of length $k$ for some $k \geq 3$, we define $R(b)$ to be the matrix with $k+1$ columns whose rows consist of the rows of $Q_{b_1}(1, k)$, followed by those of $Q_{b_2}(2, k)$, then $Q_{b_3}(3, k)$, and so on, up to the rows of $Q_{b_k}(k, k)$. For example,
	\[ R(013102)=
	\begin{pmatrix}
		1 & 1 & 0 & 0 & 0 & 0 & 0\\
		1 & 0 & 0 & 1 & 1 & 1 & 1\\
		1 & 1 & 0 & 1 & 1 & 1 & 1\\
		1 & 1 & 1 & 0 & 1 & 1 & 1\\
		1 & 1 & 1 & 0 & 0 & 1 & 1\\
		1 & 1 & 1 & 1 & 1 & 1 & 0\\
		1 & 0 & 0 & 0 & 0 & 1 & 1
	\end{pmatrix}. \]
	\begin{lemma}\label{rb}
		Let $b = b_1 b_2 \ldots b_k$ be a quaternary sequence of length $k$ such that $k \geq 3$. If $k = 3$, suppose additionally that either $b_1, b_2, b_3 \in \{1, 3\}$ or $b_1, b_2, b_3 \in \{0, 2\}$. Then, $R(b)$ contains some matrix in $\IForbRow$ as a configuration.
	\end{lemma}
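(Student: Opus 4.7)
The plan is to argue by case analysis on the structure of the quaternary sequence $b$, using the observation that $Q_0(i,k)$ coincides exactly with the $i$-th row of $\MIast k$. Thus $R(b)$ is built on an $\MIast k$-skeleton, with some single rows replaced (positions where $b_i\in\{0,1\}$) or with pairs of extra rows appended (positions where $b_i\in\{2,3\}$), all sharing the common extra column $k+1$. The baseline case $b=0^k$ gives $R(b)=\MIast k$ outright, which lies in $\IForbRow$, so the work is to reduce every other case to this one or to another prescribed element of $\IForbRow$.

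The first serious step is to analyze what a single nonzero coordinate $b_j=v$ contributes, assuming all other $b_i=0$. For $v=1$, the unique row of $Q_1(j,k)$ is the complement of the omitted $Q_0$-row, and after an appropriate cyclic reindexing of rows and columns the resulting matrix equals $\MIII k$. For $v=2$, the first row of $Q_2(j,k)$ is all-$1$ except at column $k+1$ and the second is concentrated at $\{j,j+1,k+1\}$; together with the other $k-1$ rows of $Q_0$-type and the column $k+1$ they assemble precisely $\MII{k+1}$. For $v=3$, the two almost-full rows of $Q_3(j,k)$ combined with the $Q_0$-rows produce either $\MII k$ after a suitable shift when $k\geq 5$, or one of $\MVI$ and $0101\miop\MIast4$ in the small-$k$ edge cases.

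For general $b$, the strategy is to discard rows to reduce to one of the above single-nonzero situations: if some $b_i\neq 0$, I would select the $k-1$ rows coming from the $Q_0$-blocks at the other coordinates (or one row from each $Q_3$ block in the all-$\{0,3\}$ case), together with a carefully chosen row from one $Q_v(j,k)$ block, and then restrict to $k+1$ appropriate columns. This selection projects $R(b)$ onto a matrix whose shape matches one of $\MIast{k'}$, $\MII{k'+1}$, $\MIII{k'}$ for some $k'\leq k$, or $\MVI$, depending on which values appear in $b$. When $b$ mixes values from $\{0,2\}$ and $\{1,3\}$ (so that we encounter both "sparse" rows and "near-full" rows simultaneously), the right witness will instead be $\MIV$, $\MV$, or one of the tagged matrices $0101\miop\MIast4$, $0100\miop\MII4$; the hypothesis at $k=3$ exactly rules out those mixed parities for which no such witness exists in a $4$-column matrix.

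The main obstacle will be the combinatorial bookkeeping: each pattern of $b$-values forces a particular row map $\rho$ and column map $\sigma$, and verifying $R(b)_{\rho,\sigma}\in\IForbRow$ up to permutations is a routine but error-prone check. I expect the cleanest organization to be a short list of "canonical reductions" (one per nonzero value $v\in\{1,2,3\}$ and per residue of $k$ modulo a small number), with the genuinely delicate work concentrated in the small cases $k=3,4,5$, where the exceptional matrices $\MIV,\MV,\MVI,0101\miop\MIast4,0100\miop\MII4$ must be exhibited by hand. \emph{Lemma}~\ref{MVast} should help in normalizing configurations arising from $\MV$-like rows, and the $k=3$ hypothesis should be invoked precisely to avoid the unique $b$-patterns for which no element of $\IForbRow$ fits inside $R(b)$.
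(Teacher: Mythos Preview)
Your single-nonzero analysis is essentially right (modulo an off-by-one: a single $3$ together with $k-1$ zeros gives a $(k+1)\times(k+1)$ matrix that is $\MII{k+1}$, not $\MII k$). But the reduction step ``discard rows to land in a single-nonzero $R(b')$'' is where the plan breaks down. You write that you would ``select the $k-1$ rows coming from the $Q_0$-blocks at the other coordinates''; this presupposes that $k-1$ coordinates satisfy $b_i=0$, i.e.\ that $b$ already has a single nonzero entry. For a binary $b$ with several $1$'s (say $b=0111$ with $k=4$), there are not enough $Q_0$-rows to assemble any $\MIast{k'}$ or $\MIII{k'}$ skeleton, and deleting rows cannot manufacture the missing sparse rows. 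In fact $R(0111)=0111\miop\MIast4$ contains $0100\miop\MII4$ only after dropping a \emph{column}, not just rows, and finding the right column to drop is exactly the content of \textit{Lemma}~\ref{fc}; your sketch never invokes that lemma or supplies an equivalent argument. The same issue arises for $b$ with several $2$'s or several $3$'s: the paper extracts $\MVI$ directly from two or three of the $Q_2/Q_3$ blocks, bypassing the $Q_0$-skeleton entirely, which is a different mechanism from the one you describe.

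The paper's proof is organized quite differently: it first observes that circular shifts of $b$ preserve the configuration of $R(b)$, then cases on which symbols appear. Binary $b$ is handled by appealing to $\ForbRow$ (implicitly via \textit{Lemma}~\ref{fc}); two $3$'s or a $2$ together with a $3$ yield $\MVI$ from three rows of the corresponding $Q$-blocks alone; a single $3$ with the rest binary gives $a\miop\MII{k+1}$ and then \textit{Lemma}~\ref{m2} finishes; a $2$ with no $3$ splits into a handful of subcases depending on the neighbours $b_2,b_k$ of the $2$, producing $\MVI$, $\MV$, or $0100\miop\MII4$. Two further remarks: $\MIV$ is never needed as a witness in this lemma, and \textit{Lemma}~\ref{MVast} concerns $\MVast$-configurations and is used for $W(b)$, not for $R(b)$.
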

	\begin{proof}
		First, we show that for any quaternary sequence $c$ obtained from $b$ by a sequence of shift operations, the matrix $R(c)$ represents the same configuration as $R(b)$. To prove this, it suffices to show that $R(c)$ represents the same configuration as $R(b)$ when $c$ is a single shift of $b$ (since the general case then follows by induction). Let $m$ denote the number of rows of $R(b)$, and suppose that $c$ is a shift of $b$. Define the row permutation $\rho$ as $\langle 2, 3, \ldots, m, 1 \rangle$ if $b_1 \in \{0, 1\}$, and as $\langle 3, 4, \ldots, m, 1, 2 \rangle$ if $b_1 \notin \{0, 1\}$. Also, define the column permutation $\sigma$ as $\langle 2, 3, \ldots, k, 1, k+1 \rangle$. Then, we have $R(c) = R(b)_{\rho, \sigma}$.

		Let $b = b_1 b_2 \ldots b_k$ be a quaternary sequence of length $k$, where $k \geq 3$. For each $i \in [k]$, let $f_i$ denote the row index of $R(b)$ corresponding to the first row of $Q_{b_i}(i, k)$. If $b_i \notin \{0, 1\}$, then let $s_i = f_i + 1$ denote the row index of $R(b)$ corresponding to the second row of $Q_{b_i}(i, k)$.
		
		\textbf{Case 1:} Let $b$ be a binary sequence. By construction, we have $R(b) = b \miop \MIast{k}$ for some $k \geq 3$. By applying a sequence of shift operations (if necessary), we may assume, without loss of generality, that $b$ is a bracelet. Therefore, $R(b) \in \IForbRow$.
		
		\textbf{Case 2: }Suppose that $3$ occurs more that once in $b$. Let $b_i = b_j = 3$, where $i, j \in [k]$. Without loss of generality, we may assume that $i < j$. It can be verified that \\ $R(b)_{<f_i, s_i, f_j>, <i, i+1, j+1, k+1>} = \MVI$, when $i +1 \not = j$, and $R(b)_{<f_i, s_i, s_j>, <i, i+1, j+1, k+1>} = \MVI$, when $i +1 = j$, where all additions involving $i$ and $j$ are taken modulo $k$.
		
		\textbf{Case 3: }Suppose that both $2$ and $3$ occur in $b$. Let $b_i = 2$ and $b_j = 3$. It can be verified that $R(b)_{<f_i, s_i, f_j>, <i, i+1, i+2, k+1>} = \MVI$, where all additions involving $i$ and $j$ are taken modulo $k$.
		
		\textbf{Case 4: }Suppose $3$ occurs in $b$. By Case~2, it is evident that $3$ occurs exactly once in $b$. By applying a sequence of shift operations (if necessary), we may assume, without loss of generality, that $b_k = 3$, and that $b' = b_1 b_2 \ldots b_{k-1}$ is a binary string. Therefore, $R(b)$ has the same configuration as $a \miop \MII{k+1}$, where $a$ is obtained by appending $00$ to $b'$.  
		
		If $a \not = 0100$, then by \textit{Lemma~\ref{m2}}, $R(b)$ contains some matrix in $\IForbRow$ as a configuration. If $a = 0100$, then $R(b) = 0100 \miop \MII{4}$.
		
		\textbf{Case 5: }Suppose that $2$ occurs in $b$. By applying a sequence of shift operations (if necessary), we may assume, without loss of generality, that $b_1 = 2$. If $b_2 = 1$, then it can be verified that $R(b)_{<1, 2, 3>, <1, 2, k, k+1>} = \MVI.$ If $b_2 = 2$, then it can be verified that $R(b)_{<2, 3, 4>, <1, 2, 3, k+1>} = \MVI.$ If $b_k = 1$, then 
		$R(b)_{<1, 2, k>, <3, k+1, 1, 2>} = \MVI.$ If $b_k = 2$, then $R(b)_{<1, 2, k+1>, <k, 2, k+1, 1>} = \MVI.$
		
		Now suppose that $b_2 = b_k = 0$. If $k \geq 4$, then it can be easily verified that $R(b)_{<1,\, 2,\, 3,\, k>,\, <1,\, k,\, 3,\, 2,\, k+1>} = \MV.$ If $k = 3$, then by the assumption of the lemma, $b_1, b_2, b_3 \in \{0, 2\}$. By applying a sequence of shift operations (if necessary), we may assume, without loss of generality, that $b = 200$. Then, $R(b)_{<3,\, 4,\, 1,\, 2>,\, <3,\, 2,\, 4,\, 1>} = 0100 \miop \MII{4}.$ This completes the proof of the lemma.

	\end{proof}
	\subsection{Matrices $U$ and $W$}\label{ssec:UW}
	
	To each quaternary sequence $b$ of length $4$, we associate a matrix denoted by $W(b)$. We establish the following Lemma, which states that for certain quaternary sequences $b$, the matrix $W(b)$ contains some matrix in $\IForbRow$ as a configuration. We now introduce the necessary definitions to formalize this.
	
	We first define, for each $i\in[4]$, the following matrices:
	\begin{itemize}
		\item $U_0(i)$ whose only rows coincides with row $i$ of $\MVast$;
		\item $U_1(i)$ is the complement of $U_0(i)$.
	\end{itemize}
	For each $i\in[3]$, we define the following matrix:
	\begin{itemize}
		\item $U_2(i)$ is the $2 \times 6$ matrix whose first row has $0$'s in columns $5 - 2i$ and $6 - 2i$, and $1$'s in the remaining columns, and whose second row has $0$'s in columns $7 - 2i$ and $8 - 2i$, and $1$'s in the remaining columns, where all subtractions involving $i$ are taken modulo $6$.
	\end{itemize}
	For $i = 2$, we define the following matrix:
	\begin{itemize}
		\item $U_3(i)$ is the $2\times 6$ matrix whose first row has $0$ in column 5 and $1$'s in the remaining columns, and whose second row has $0$ in column $6$ and $1$'s in the remaining columns.
	\end{itemize}
		
	For each quaternary sequence $b=b_1 b_2 b_3 b_4$ of length $4$ such that $b_4 = 0$, we define $W(b)$ as the matrix having six columns and whose rows are those of $U_{b_1}(1)$, followed by those of $U_{b_2}(2)$, followed by those of $U_{b_3}(3)$, followed by those of $U_{0}(4)$. For instance,
	\[ W(2310) =
	\begin{pmatrix}
		0 & 0 & 1 & 1 & 1 & 1\\
		1 & 1 & 0 & 0 & 1 & 1\\
		1 & 1 & 1 & 1 & 0 & 1\\
		1 & 1 & 1 & 1 & 1 & 0\\
		1 & 1 & 0 & 0 & 1 & 1\\
		1 & 0 & 0 & 1 & 1 & 0
	\end{pmatrix}. \]
	
	\newcommand\LemW{If $b=b_1 b_2 b_3 b_4$ is a quaternary sequence such that $b_4=0$ and $b_1, b_3 \not = 3$, then $W(b)$ contains some matrix in $\IForbRow$ as a configuration.}
	\begin{lemma}\label{W}\LemW\end{lemma}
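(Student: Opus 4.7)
The plan is to treat $W(b)$ by case analysis on the quaternary sequence $b = b_1 b_2 b_3 b_4$, under the hypotheses $b_4 = 0$, $b_1, b_3 \in \{0,1,2\}$, and $b_2 \in \{0,1,2,3\}$. The approach mirrors that of Lemma~\ref{rb}: for each configuration of $b$, I exhibit explicit row and column maps $\rho, \sigma$ such that $W(b)_{\rho, \sigma}$ equals a matrix of $\IForbRow$ up to a permutation of rows and columns.

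\textbf{Binary reduction.} First I dispose of the case $b_1, b_2, b_3 \in \{0,1\}$. Here each block $U_{b_i}(i)$ contributes a single row, and the definitions of $U_0(i)$ and $U_1(i)$ give $W(b) = b \miop \MVast$. By Lemma~\ref{MVast}, $b \miop \MVast$ represents the same configuration as one of $\MIV$, $\overline{\MIV}$, $\MVast$, $\overline{\MVast}$, each of which lies in $\ForbRow$. Applying Lemma~\ref{fc} to that element produces a matrix of $\IForbRow$ that is contained in $W(b)$ as a configuration.

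\textbf{Non-binary cases.} For the remaining sequences, at least one of $b_1, b_2, b_3$ lies in $\{2,3\}$, so $W(b)$ includes at least one of the two-row blocks $U_2(1)$, $U_2(2)$, $U_2(3)$, $U_3(2)$. I split first on whether $b_2 = 3$. In that event the two rows of $U_3(2)$, namely $(1,1,1,1,0,1)$ and $(1,1,1,1,1,0)$, differ in exactly two columns and already carry a conflicting circular-interval pattern; combining them with a suitably chosen third row drawn from $U_{b_1}(1)$, $U_{b_3}(3)$, or $U_0(4) = (1,0,0,1,1,0)$ yields a $3 \times 4$ submatrix matching $\MVI$ after a column permutation. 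I would enumerate the nine pairs $(b_1,b_3) \in \{0,1,2\}^2$ and supply explicit $\rho$ of length three and $\sigma$ of length four for each. If instead $b_2 \ne 3$, then some position carries $b_i = 2$; I further split on which. The two shifted rows of each $U_2(i)$ play a role analogous to adjacent $11$'s in a bracelet pattern, and combined with the remaining rows (which by the binary reduction form a submatrix of $b' \miop \MVast$ for the binary part $b'$) extract either $\MVI$, $\MV$, or $0100 \miop \MII{4}$ in $W(b)$ — all elements of $\IForbRow$.

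\textbf{Main obstacle.} The principal difficulty is that the argument is unavoidably an exhaustive enumeration over the $36 - 8 = 28$ non-binary sequences. In each case the choice of $\rho$ and $\sigma$ is dictated by where the $0$'s of the two-row blocks fall relative to the row $U_0(4) = (1,0,0,1,1,0)$, and guessing the right forbidden matrix (among $\MVI$, $\MV$, and $0100 \miop \MII{4}$) to target is case-specific. The organizational challenge is to group cases so that repeated arguments can share a single explicit $\rho, \sigma$; beyond that, each verification is a direct, if tedious, entry-by-entry check that $W(b)_{\rho,\sigma}$ matches the claimed element of $\IForbRow$ up to a permutation of rows and columns.
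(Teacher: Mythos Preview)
Your plan is correct and follows the same overall strategy as the paper, but you are dramatically overcounting the necessary casework. The paper's proof needs only about six explicit verifications, not twenty-eight.

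The shortcut you are missing is this: whenever $b_i = 2$ for some $i \in \{1,2,3\}$, the two rows of $U_2(i)$ together with the single fixed row $U_0(4) = (1,0,0,1,1,0)$ already contain $\MVI$ as a configuration, \emph{independently of the values of the other $b_j$}. Concretely, with $\sigma_1 = \langle 1,2,4,5\rangle$, $\sigma_2 = \langle 1,4,5,6\rangle$, $\sigma_3 = \langle 1,3,4,5\rangle$, the submatrix $W(b)_{\langle f_i, s_i, f_4\rangle,\ \sigma_i}$ has the same configuration as $\MVI$. Thus the entire ``$2$ occurs'' case costs three verifications, not a branch over all remaining positions.

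Once that case is disposed of, any remaining non-binary sequence has $b_2 = 3$ and $b_1,b_3 \in \{0,1\}$ (since $2$ no longer occurs and $b_1,b_3 \ne 3$ by hypothesis). That leaves only the four pairs $(b_1,b_3) \in \{0,1\}^2$, and the paper handles them in three subcases: $b_1=b_3=0$ gives $\MV$ via $W(b)_{\langle 1,2,4,5\rangle,\langle 1,2,3,4,5\rangle}$; $b_1=1$ gives $\MVI$ via $W(b)_{\langle 1,2,5\rangle,\langle 1,3,4,5\rangle}$; and $b_3=1$ gives $\MVI$ via $W(b)_{\langle 2,3,4\rangle,\langle 2,3,5,6\rangle}$.

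So the structure of your proof is right, but your claim that the enumeration is ``unavoidably'' over $28$ sequences is not: exploiting that the row $U_0(4)$ is always present and interacts uniformly with any $U_2(i)$ block collapses almost everything.
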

	\begin{proof}
		Let $b = b_1 b_2 b_3 b_4$ be a quaternary sequence of length $4$. For each $i \in [4]$, let $f_i$ denote the row index of $W(b)$ corresponding to the first row of $U_{b_i}(i)$. If $b_i \notin \{0, 1\}$, then let $s_i = f_i + 1$ denote the row index of $W(b)$ corresponding to the second row of $U_{b_i}(i)$.
		
		\textbf{Case 1:} Let $b$ be a binary sequence. By construction, we have $W(b) = b \miop \MVast$ which, by \textit{Theorem }\ref{circr}, represents the same configuration as some matrix in $\ForbRow$. By \textit{Lemma }\ref{fc}, $W(b)$ contains some matrix in $\IForbRow$ as a configuration.
		
		\textbf{Case 2:} Suppose that $2$ occurs in $b$, and let $b_i = 2$ for some $i \in [k]$. By construction, $b_4 = 0$. If we define $\sigma_1 = <1, 2, 4, 5>$, $\sigma_2 = <1, 4, 5, 6>$, and $\sigma_3 = <1, 3, 4, 5>$, then it can be verified that $W(b)_{<f_i, s_i, f_4>, \sigma_i}$ has the same configuration as $\MVI$.
		
		\textbf{Case 3:} Suppose that $3$ occurs in $b$. By construction, $b_4 = 0$ and $b_2 = 3$.  If $b_1 = b_3 = 0$, then it can be verified that 
		$W(b)_{<1, 2, 4, 5>, <1, 2, 3, 4, 5>} = \MV$. If $b_1 = 1$, then it can be verified that $W(b)_{<1, 2, 5>, <1, 3, 4, 5>}$ has the same configuration as $\MVI$. If $b_3 = 1$, then it can be verified that $W(b)_{<2, 3, 4>, <2, 3, 5, 6>}$ has the same configuration as $\MVI$. This completes the proof of the lemma.
	\end{proof}
	\subsection{Matrices $H$ and $G$}\label{ssec:HG}
	
	For each binary sequence $\alpha = \alpha_1\alpha_2\alpha_3\alpha_4$ of length $4$ and each $i \in [3]$, we define $H_i(\alpha)$ to be the $6 \times 6$ matrix obtained from $\MVast$ by appending two additional rows (the fifth and sixth rows), both having $1$'s in columns $3 - 2i$ and $4 - 2i$. In the fifth row, the entries in columns $5 - 2i$, $6 - 2i$, $1 - 2i$, and $2 - 2i$ (with all subtractions taken modulo $6$) are given by $\alpha_1$, $\alpha_2$, $\alpha_3$, and $\alpha_4$, respectively. In the sixth row, the corresponding entries are $\overline{\alpha_1}$, $\overline{\alpha_2}$, $\overline{\alpha_3}$, and $\overline{\alpha_4}$, respectively.
	 For instance,
	\[ H_1(\alpha_1\alpha_2\alpha_3\alpha_4)=
	\begin{pmatrix}
		1 & 1 & 0 & 0 & 0 & 0\\
		1 & 1 & 1 & 1 & 0 & 0\\
		0 & 0 & 1 & 1 & 0 & 0\\
		1 & 0 & 0 & 1 & 1 & 0\\
		1 & 1 & \alpha_1 & \alpha_3 & \alpha_3 & \alpha_4 \\
		1 & 1 & \overline{\alpha_1} & \overline{\alpha_2} &\overline{ \alpha_3} & \overline{\alpha_4}
	\end{pmatrix}. \]
	
	For each binary sequence $\gamma=\gamma_1\gamma_2\gamma_3$ of length $3$, we define
	\[ G(\gamma)=
	\begin{pmatrix}
		1 & 1 & 0 & 0 & 0 & 0\\
		1 & 1 & 1 & 1 & 0 & 0\\
		0 & 0 & 1 & 1 & 0 & 0\\
		1 & 0 & 0 & 1 & 1 & 0\\
		1 & \gamma_1 &  \gamma_2 & 1 &  1 & \gamma_3\\
		1 & \overline{\gamma_1} & \overline{\gamma_2} & 1 & 1 & \overline{\gamma_3}
	\end{pmatrix}. \]
	
	\begin{lemma}\label{X}
	Let $\alpha$ be a binary sequence of length $4$ and let $i\in[3]$. If $\alpha\notin\{0000, 0011, 1100, 1111\}$, then $H_i(\alpha)$ contains as a configuration some matrix in $\ForbRow$ having fewer than $6$ columns.
	\end{lemma}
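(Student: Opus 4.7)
The plan is a case analysis on the binary sequence $\alpha$, after two reductions. First, exchanging rows $5$ and $6$ of $H_i(\alpha)$ yields $H_i(\bar\alpha)$ (since row $6$ is obtained from row $5$ by complementing exactly the four $\alpha$-columns), so we may identify $\alpha$ with its complement; this pairs the twelve non-excluded sequences into six equivalence classes, with representatives $\{0001,0010,0100,0101,0110,0111\}$ for concreteness. Second, the three choices of $i\in[3]$ correspond to the three cyclic positions in which rows $5$ and $6$ attach to the $\MVast$-block in rows $1$ through $4$, and I would handle them uniformly since the scheme is structurally the same.

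For each representative $\alpha$, the strategy is to produce an explicit row map $\rho\colon[k]\to[6]$ with $\rho(k)\in\{5,6\}$ together with a column map $\sigma\colon[\ell]\to[6]$ with $\ell\in\{4,5\}$, such that $(H_i(\alpha))_{\rho,\sigma}$ equals $a\miop \MIast{\ell-1}$ up to permutations of rows and columns, for a suitable bracelet $a$. By Theorem~\ref{circr}, this submatrix then belongs to $\ForbRow$ and has $\ell<6$ columns, as required. Intuitively, the first four rows of $H_i(\alpha)$ already form $\MVast\in\ForbRow$, but that is a six-column configuration; rows $5$ and $6$ provide enough asymmetry, guaranteed precisely by $\alpha$ lying outside the excluded set, to ``replace'' one of the columns and shrink the forbidden configuration from $\MVast$ to an $\MIast{k}$-type of smaller width. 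Concretely, I would pick three rows from the $\MVast$-block together with one of rows $5,6$, and drop one column so that row $5$ (restricted to the remaining five columns) furnishes the ``wrap-around'' 1 needed for an $\MIast{4}$-pattern.

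The main obstacle is the bookkeeping of the case analysis: each of the six representatives of $\alpha$ must be assigned an explicit $(\rho,\sigma)$ and the resulting submatrix must be verified directly to be of the required form. A secondary subtlety is that the conclusion excludes six-column forbidden configurations, so $\MIV$, $\overline{\MIV}$, $\MVast$, and $\overline{\MVast}$ are not permissible end results; the argument must genuinely land in $a\miop\MIast{3}$ or $a\miop\MIast{4}$ (up to complement), which constrains the choice of $(\rho,\sigma)$ more tightly than in earlier lemmas such as Lemma~\ref{fc}. Lemma~\ref{MVast} may be useful for cross-checking configuration classes during the verification, since it enumerates the configuration classes reachable from $\MVast$ by row complementation.
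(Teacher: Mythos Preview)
Your plan matches the paper's proof essentially verbatim: the same complement reduction (swapping rows $5$ and $6$ turns $H_i(\alpha)$ into $H_i(\bar\alpha)$, so one may take $\alpha_1=0$), the same six representatives $\{0001,0010,0100,0101,0110,0111\}$, and the same strategy of exhibiting explicit $(\rho,\sigma)$ landing in an $a\miop\MIast{3}$ or $a\miop\MIast{4}$ configuration. The only caveat is that your hope to treat the three values of $i$ ``uniformly'' does not pan out in the paper---each $(\alpha,i)$ pair gets its own hand-built $(\rho,\sigma)$, and sometimes only two rows of the $\MVast$-block are needed (yielding $\MIast{3}$ rather than $\MIast{4}$)---but this is a bookkeeping detail you already flagged as the main obstacle.
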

	\begin{proof}
		Let $\alpha = \alpha_1 \alpha_2 \alpha_3 \alpha_4$ be a binary sequence of length $4$. Since $H_i(\overline{\alpha})_{<1, 2, 3, 4, 6, 5>, id_6} = H_i(\alpha)$, the lemma holds for $\alpha$ if and only if it holds for $\overline{\alpha}$. In particular, by complementing $\alpha$ (if necessary), we may assume that $\alpha_1 = 0$. We now consider the following cases.
		
		Case 1: $\alpha = 0001$. In this case,  
		$H_1(\alpha)_{<2, 5, 4>, <6, 5, 3, 1>} = H_2(\alpha)_{<3, 5, 4>, <6, 1, 3, 4>} = \overline{\MIast{3}},$ and $H_3(\alpha)_{<1, 5, 4>, <1, 2, 4, 6>} = \MIast{3}$.

		Case 2: $\alpha = 0010$. In this case, 
		$H_1(\alpha)_{<2, 4, 5>, <2, 4, 5, 6>} = H_2(\alpha)_{<4, 3, 5>, <5, 4, 3, 2>} = \MIast{3}$, and $H_3(\alpha)_{<1, 4, 5>, <5, 3, 2, 1>} = \overline{\MIast{3}}$.
		
		Case 3: $\alpha = 0100$. In this case, $H_1(\alpha)_{<3, 4, 5>, <5, 2, 3, 4>} = H_3(\alpha)_{<2, 5, 4>, <6, 5, 2, 4>} = \overline{\MIast{3}}$, and $H_2(\alpha)_{<1, 5, 4>, <1, 2, 5, 3>} = \MIast{3}$.
		
		Case 4: $\alpha = 0101$. In this case, $H_1(\alpha)_{<2, 5, 4>, <6, 5, 3, 1>} = H_2(\alpha)_{<1, 4, 6>, <4, 3, 2, 1>} = \overline{\MIast{3}}$, and $H_3(\alpha)_{<1, 4, 2, 5>, <1, 2, 6, 5, 4>} = 01111 \miop \MIast{4}$.
		
		Case 5: $\alpha = 0110$. In this case, $H_1(\alpha)_{<3, 6, 5, 2>, <3, 4, 5, 6, 2>} = 0111 \miop \MIast{4}$,\\  $H_2(\alpha)_{<1, 6, 3, 4>, <1, 2, 3, 4, 6>} = 0100 \miop \MIast{4}$, and $H_3(\alpha)_{<1, 6, 4>, <1, 2, 4, 6>} = \MIast{3}$.
		
		Case 6: $\alpha = 0111$. In this case, $H_1(\alpha)_{<6, 3, 4>, <1, 3, 4, 6>} =H_3(\alpha)_{<2, 6, 4>, <1, 3, 5, 6>} = \MIast{3}$, and $H_2(\alpha)_{<1, 6, 4>, <6, 4, 2, 1>} = \overline{\MIast{3}}$. This completes the poof of the lemma.
	\end{proof}
	\begin{lemma}\label{G}
	Let $\gamma$ be a binary sequence of length $3$. If $\gamma$ is nonconstant, then $G(\gamma)$ contains as a configuration some matrix in $\ForbRow$ having fewer than $6$ columns.
	\end{lemma}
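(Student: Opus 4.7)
The plan is to mimic the structure of the proof of Lemma \ref{X}: reduce the problem by a natural symmetry of $G$ and then perform an explicit case analysis on the reduced set of sequences.

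First, I would record the symmetry $G(\overline{\gamma})_{\langle 1,2,3,4,6,5\rangle, id_6} = G(\gamma)$. This follows immediately from the definition of $G$: rows $1$--$4$ are independent of $\gamma$, and passing from $\gamma$ to $\overline{\gamma}$ swaps rows $5$ and $6$. Consequently, the statement of the lemma holds for $\gamma$ if and only if it holds for $\overline{\gamma}$, so it suffices to treat one representative from each pair $\{\gamma, \overline{\gamma}\}$. The six nonconstant binary sequences of length $3$ thus reduce to three cases, say $\gamma \in \{001, 010, 011\}$.

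Next, for each of these three cases I would exhibit explicit row and column maps $\rho$ and $\sigma$ such that $G(\gamma)_{\rho,\sigma}$ equals, up to permutation of rows and columns, a matrix in $\ForbRow$ with strictly fewer than $6$ columns. The natural candidates are $\MIast 3$ and $\overline{\MIast 3}$ (both $3\times 4$) and the matrices $a\miop\MIast 4$ with $a\in A_4$ (each $4\times 5$). The selection of $\rho$ is guided by the fact that rows $1$--$4$ of $G(\gamma)$ are exactly $\MVast$; any forbidden configuration on fewer columns must therefore involve at least one of rows $5$ or $6$, which are precisely the rows where the nonconstancy of $\gamma$ manifests itself. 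For each $\gamma$, one expects a triple of rows including row $5$ or row $6$ together with four suitable columns to yield an $\MIast 3$ or $\overline{\MIast 3}$ configuration, or a quadruple of rows together with five columns to yield an $a\miop\MIast 4$ configuration.

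The main obstacle is locating, for each of the three cases, a correct choice of $\rho$ and $\sigma$; although the verification is purely mechanical, the search space (even after the symmetry reduction) is nontrivial and must be done carefully so that the resulting submatrix has fewer than six columns. Once the candidate maps are identified, checking that $G(\gamma)_{\rho,\sigma}$ matches one of the listed forbidden matrices is a direct entrywise comparison, entirely parallel to the presentation used in Lemma \ref{X}. The output of the proof is therefore a short table: one triple $(\rho,\sigma,F)$ per case, with $F\in\ForbRow$ having at most $5$ columns, followed by a one-line verification for each entry.
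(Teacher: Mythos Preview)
Your proposal is correct and follows essentially the same approach as the paper's proof: the paper uses precisely the symmetry $G(\overline{\gamma})_{\langle 1,2,3,4,6,5\rangle, id_6} = G(\gamma)$ to reduce to the three cases $\gamma\in\{001,010,011\}$ (via $\gamma_1=0$), and then for each case exhibits explicit maps $\rho,\sigma$ realizing a configuration of the form $a\miop\MIast{4}$ (a $4\times 5$ matrix in $\ForbRow$). The only minor difference is that the paper settles all three cases with $a\miop\MIast{4}$ configurations rather than the $3\times 4$ matrices you also list as candidates.
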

	\begin{proof}
		Let $\gamma = \gamma_1 \gamma_2 \gamma_3$ be a binary sequence of length $3$. Since $G(\overline{\gamma})_{<1, 2, 3, 4, 6, 5>, id_6} = G(\gamma)$, the lemma holds for $\gamma$ if and only if it holds for $\overline{\gamma}$. In particular, by complementing $\gamma$ (if necessary), we may assume that $\gamma_1 = 0$. We now consider the following cases.
		
		Case 1: $\gamma = 001$. In this case, $G(\gamma)_{<1, 4, 3, 5>, <2, 1, 4, 3, 6>} = 0001 \miop \MIast{4}$.
		
		Case 2: $\gamma = 010$. In this case, $G(\gamma)_{<1, 5, 2, 4>, <1, 2, 6, 5, 3>} = 0110 \miop \MIast{4}$.
		
		Case 3: $\gamma = 011$. In this case, $G(\gamma)_{<3, 4, 2, 6>, <3, 4, 5, 6, 2>} = 0011 \miop \MIast{4}$. This completes the proof of the lemma.
	\end{proof}
	\begin{theorem}\label{icp}
		A matrix $M$ has $I$-circular property if and only if $M$ contains no matrix in $\IForbRow$ as a configuration.
	\end{theorem}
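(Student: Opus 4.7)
My plan is to derive Theorem~\ref{icp} from the circular-ones characterization of Theorem~\ref{circr} via the bridge $M \leftrightarrow \Lambda(M)$ supplied by Lemma~\ref{IM}. The forward direction is immediate: if $M$ has the $I$-circular property witnessed by an order $\preccurlyeq_C$, then for any row map $\rho$ and column map $\sigma$ the restriction of $\preccurlyeq_C$ to the columns in the image of $\sigma$ is an $I$-circular order for $M_{\rho,\sigma}$, so the property is inherited by configurations; combined with Corollary~\ref{NFIM}, this rules out every matrix of $\IForbRow$ from appearing as a configuration of $M$.

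For the converse I argue contrapositively. Assume $M$ lacks the $I$-circular property. By Lemma~\ref{IM}, $\Lambda(M)$ lacks the circular-ones property, so by Theorem~\ref{circr} some $F \in \ForbRow$ is a configuration of $\Lambda(M)$; fix such an $F$ with the fewest columns. Each row of the selected configuration is either an original row of $M$ or an intersection row $r \cap s$ with $r,s$ nontrivial rows of $M$ satisfying $\overline{r} \subsetneq s$. Unwinding the intersection rows into their underlying pairs yields a configuration inside $M$ of one of the auxiliary matrices built in Sections~3.1--3.3, and the remaining task is to extract from this configuration a matrix of $\IForbRow$.

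The main step is a case analysis on $F$. When $F = a \miop \MIast{k}$, I encode each row $i$ by a digit $b_i \in \{0,1,2,3\}$ recording whether it is an original row equal to row $i$ of $\MIast{k}$ or its complement, or an intersection of two rows realizing row $i$ of $\MIast{k}$ or its complement; the unwound configuration is then $R(b)$, and Lemma~\ref{rb} produces a matrix of $\IForbRow$ in $M$. The boundary subcase $k = 3$ requires the additional hypothesis on $b$ in Lemma~\ref{rb}, which can be arranged by exploiting the shift invariance of $R(b)$ together with the fact that $A_3 = \{000, 111\}$ forces the binary part of $b$ to be constant. When $F \in \{\MIV,\overline{\MIV},\MVast,\overline{\MVast}\}$, Lemma~\ref{MVast} lets me treat $F$ uniformly as $a \miop \MVast$ for a suitable binary sequence $a$ of length $4$, and an analogous quaternary encoding yields a configuration inside $M$ of $W(b)$, $H_i(\alpha)$, or $G(\gamma)$ according to which rows are intersection rows and how their underlying pairs sit. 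Lemma~\ref{W} supplies a matrix of $\IForbRow$ directly in the $W$-case, while in the $H$- and $G$-cases Lemmas~\ref{X} and~\ref{G} produce a matrix of $\ForbRow$ with strictly fewer than six columns; this smaller matrix is then a configuration of $\Lambda(M)$, contradicting the minimality of $F$.

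The principal obstacle is this $\MVast$-type case. The six-column matrices of $\ForbRow$ admit many combinatorial patterns by which rows of $F$ can be realized as intersections, and $W(b)$, $H_i(\alpha)$, $G(\gamma)$ individually cover only fragments of them. The technical work is to align the quaternary encoding with the symmetries of $\MVast$ from Lemma~\ref{MVast} (row complementation, rotation, and column permutations) so that every pattern is routed to one of the three constructions under the hypotheses it needs, and to verify that the constant sequences excluded in Lemmas~\ref{X} and~\ref{G} correspond to degenerate patterns ruled out by the nontriviality condition in the definition of $\Lambda$ and by the flexibility available when unwinding an intersection row into its underlying pair.
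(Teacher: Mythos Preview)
Your proposal is correct and follows essentially the same route as the paper. Two minor refinements: the column-minimality of $F$ is also invoked in the $a\miop\MIast{k}$ case (when an intersection pair realizes a $Q_0$-type row, an arbitrary pattern on the remaining columns would yield a shorter $a'\miop\MIast{|a'|}$ inside $\Lambda(M)$, so minimality forces the pair into the $Q_0$ or $Q_2$ shape), and the $k=3$ hypothesis of Lemma~\ref{rb} holds automatically because $a_i=0$ forces $b_i\in\{0,2\}$ while $a_i=1$ forces $b_i\in\{1,3\}$, so $a\in A_3=\{000,111\}$ already gives the required parity constraint without any appeal to shifts.
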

	\begin{proof}
		Suppose that $M$ has the $I$-circular property. Assume, for the sake of contradiction, that $M$ contains some matrix $F \in \IForbRow$ as a configuration. By \textit{Corollary}~\ref{NFIM}, the matrix $F$ does not possess the $I$-circular property. This, in turn, implies that $M$ itself cannot have the $I$-circular property, leading to a contradiction.
		
		Suppose that $M$ contains no matrix in $\IForbRow$ as a configuration. Assume, for the sake of contradiction, that $M$ does not have the $I$-circular property. Let $k_M$ denote the number of rows of the matrix $M$. By \textit{Lemma}~\ref{IM}, the matrix $\Lambda(M)$ does not have the circular-ones property. Then, by \textit{Theorem}~\ref{circr}, $\Lambda(M)$ contains some matrix in $\ForbRow$ as a configuration. Let $F \in \ForbRow$ be a matrix with the minimum possible number of columns such that $F$ is contained in $\Lambda(M)$ as a configuration. Let $\rho$ and $\sigma$ be a row map and a column map, respectively, such that $\Lambda(M)_{\rho,\sigma} = F$.
		
		\textbf{Case 1: }Firstly, suppose that $F = a \miop \MIast{k}$ for some $k \ge 3$ and some $a \in A_k$. Assume that $a_i = 0$, and let $u = \rho(i)$ for each $i \in [k]$. If $u \in [k_M]$, then
		\[
		F_{<u>} = \Lambda(M)_{<u>, \sigma} = M_{<u>, \sigma} = Q_0(i, k).
		\]
		Otherwise, let $r, s \in [k_M]$ be such that $\overline{M}_{<r>, \sigma}$ is properly contained in $M_{<s>, \sigma}$ and
		\[
		\Lambda(M)_{<u>, \sigma} = M_{<r>, \sigma} \cap M_{<s>, \sigma}.
		\]
		
		Since $\Lambda(M)_{<u>, \sigma}$ has $1$'s in columns $i$ and $i+1$ and $0$'s in the remaining columns, both $M_{<r>, \sigma}$ and $M_{<s>, \sigma}$ contain $1$'s in columns $i$ and $i+1$. Since $\overline{M}_{<r>, \sigma}$ is properly contained in $M_{<s>, \sigma}$, if $M_{<s>, \sigma}$ has a $0$ in any column, then $M_{<r>, \sigma}$ has a $1$ in that column, and if $M_{<r>, \sigma}$ has a $0$ in any column, then $M_{<s>, \sigma}$ has a $1$ in that column. Therefore, $M_{<r>, \sigma}$ and $M_{<s>, \sigma}$ do not coincide in any columns other than $i$ and $i+1$. Without loss of generality, suppose that $M_{<r>, \sigma}$ has a $0$ and $M_{<s>, \sigma}$ has a $1$ in column $k+1$.
		
		\textbf{\textit{Claim: }}\textit{$M_{<r>, \sigma}$ has all $0's$ or all $1's$ in the columns different from $i, i+1$, and $k+1$.}
		
		Suppose, for the sake of contradiction, that there is some $0$ and some $1$ in columns other than $i$, $i+1$, and $k+1$. Therefore, there exist $x, y \in [k]$ such that $M_{<r>, \sigma}$ has $0$'s in all the columns in $\{x, x+1, \ldots, y\}$, and $1$'s in columns $x-1$ and $y+1$ (where addition and subtraction are modulo $k$), with $(x-1, y+1) \ne (i+1, i)$. 
		
		Hence, if $\rho' = \langle \rho(x-1), \rho(x), \ldots, \rho(y), s \rangle$, $\sigma' = \langle x-1, x, \ldots, y+1, k+1 \rangle$, and $a'$ is the sequence obtained from $a_{\rho \circ \langle x-1, x, \ldots, y \rangle}$ by appending a $0$, then $\Lambda(M)_{\rho', \sigma'} = a' \miop \MIast{|a'|}$ is a matrix representing the same configuration as some matrix in $\ForbRow$ that is contained in $\Lambda(M)$ as a configuration and has fewer columns than $F$, contradicting the choice of $F$. Therefore, $M_{<r>, \sigma}$ must have all $0$'s or all $1$'s in the columns other than $i$, $i+1$, and $k+1$.
		
	If $M_{<r>, \sigma}$ has $0$'s in the columns other than $i$, $i+1$, and $k+1$, then $M_{<r>, \sigma}$ coincides with the only row of $Q_0(i, k)$. If $M_{<r>, \sigma}$ has $1$'s in the columns other than $i$, $i+1$, and $k+1$, then $M_{<s>, \sigma}$ has $0$'s in the columns other than $i$, $i+1$, and $k+1$. Therefore, $M_{<r>, \sigma}$ and $M_{<s>, \sigma}$ coincide with the rows of $Q_2(i, k)$.
	
	Now assume that $a_i = 1$, and let $u = \rho(i)$ for each $i \in [k]$. If $u \in [k_M]$, then
	\[
	F_{<u>} = \Lambda(M)_{<u>, \sigma} = M_{<u>, \sigma} = Q_1(i, k).
	\]
	Otherwise, let $r, s \in [k_M]$ be such that $\overline{M}_{<r>, \sigma}$ is properly contained in $M_{<s>, \sigma}$ and
	\[
	\Lambda(M)_{<u>, \sigma} = M_{<r>, \sigma} \cap M_{<s>, \sigma}.
	\]
	
	Since $\Lambda(M)_{<u>, \sigma}$ has $0$'s in columns $i$ and $i+1$, and $1$'s in the remaining columns, both $M_{<r>, \sigma}$ and $M_{<s>, \sigma}$ contain $1$'s in the columns other than $i$ and $i+1$, and $M_{<r>, \sigma}$ and $M_{<s>, \sigma}$ do not coincide in columns $i$ and $i+1$. 
	
	Let $(\alpha, \beta)$ and $(\overline{\alpha}, \overline{\beta})$ be the entries of $M_{<r>, \sigma}$ and $M_{<s>, \sigma}$ at columns $i$ and $i+1$, respectively. If $(\alpha, \beta) = (0, 0)$ (respectively, $(\alpha, \beta) = (1, 1)$), then $M_{<r>, \sigma}$ (respectively, $M_{<s>, \sigma}$) coincides with the only row of $Q_1(i, k)$. If $(\alpha, \beta) = (0, 1)$ or $(\alpha, \beta) = (1, 0)$, then $M_{<r>, \sigma}$ and $M_{<s>, \sigma}$ coincide with the rows of $Q_3(i, k)$.
	
	Therefore, for each $i \in [k]$ and $j \in \{0, 1, 2, 3\}$, each row of $Q_j(i, k)$ coincides with some row of $M_{id_{k_M}, \sigma}$. Thus, there exists a quaternary sequence $b = b_1 b_2 \ldots b_k$ such that $R(b)$ is contained in $M$ as a configuration. Since, by \textit{Lemma}~\ref{rb}, $R(b)$ contains some matrix $F' \in \IForbRow$ as a configuration, it follows that $M$ contains $F'$ as a configuration. This contradicts the assumption that $M$ contains no matrix in $\IForbRow$ as a configuration. Therefore, Case 1 cannot occur.
	
	\textbf{Case 2: }Now suppose that $F = a \miop\MVast$ for some $a \in \{0000, 0100, 0010, 1010\}$. By \textit{Lemma }\ref{MVast}, we can say that $F$ has the same configuration as some matrix in\\ $\{\MIV, \overline{\MIV}, \MVast, \overline{\MVast}\}$. 
	
	\textbf{Subcase 1: }Assume that $a_i = 0$, and let $u = \rho(i)$ for each $i \in \{1,3\}$. If $u \in [k_M]$, then
	\[
	F_{<u>} = \Lambda(M)_{<u>, \sigma} = M_{<u>, \sigma} = U_0(i).
	\]
	Otherwise, let $r, s \in [k_M]$ be such that $\overline{M}_{<r>, \sigma}$ is properly contained in $M_{<s>, \sigma}$ and
	\[
	\Lambda(M)_{<u>, \sigma} = M_{<r>, \sigma} \cap M_{<s>, \sigma}.
	\]
	
	Since $\Lambda(M)_{<u>, \sigma}$ has $1$'s in columns $i$ and $i+1$, and $0$'s in the remaining columns, both $M_{<r>, \sigma}$ and $M_{<s>, \sigma}$ contain $1$'s in columns $i$ and $i+1$. Moreover, since $\overline{M}_{<r>, \sigma}$ is properly contained in $M_{<s>, \sigma}$, the rows $M_{<r>, \sigma}$ and $M_{<s>, \sigma}$ do not coincide in any columns other than $i$ and $i+1$. 
	
	Let $(\alpha_1, \alpha_2, \alpha_3, \alpha_4)$ and $(\overline{\alpha}_1, \overline{\alpha}_2, \overline{\alpha}_3, \overline{\alpha}_4)$ be the entries of $M_{<r>, \sigma}$ and $M_{<s>, \sigma}$ at columns $i+2$, $i+3$, $i+4$, and $i+5$, respectively (with all additions involving $i$ taken modulo $6$). Let $\alpha = \alpha_1 \alpha_2 \alpha_3 \alpha_4$. Note that $H_i(\alpha)$ is contained in $\Lambda(M)$ as a configuration. Suppose, for the sake of contradiction, that $\alpha \notin \{0000, 0011, 1100, 1111\}$. By \textit{Lemma}~\ref{X}, $H_i(\alpha)$ contains some matrix $F' \in \ForbRow$ as a configuration that has fewer columns than $F$. Hence, $\Lambda(M)$ contains $F'$ as a configuration with fewer columns than $F$, contradicting the choice of $F$. Therefore, $\alpha \in \{0000, 0011, 1100, 1111\}$. If $\alpha = 0000$ (respectively, $\alpha = 1111$), then $M_{<r>, \sigma}$ (respectively, $M_{<s>, \sigma}$) coincides with the only row of $U_0(i)$. If $\alpha = 0011$ or $\alpha = 1100$, then $M_{<r>, \sigma}$ and $M_{<s>, \sigma}$ coincide with the rows of $U_2(i)$.
	
	Now assume that $a_i = 1$, and let $u = \rho(i)$ for each $i \in \{1,3\}$. If $u \in [k_M]$, then
	\[
	F_{<u>} = \Lambda(M)_{<u>, \sigma} = M_{<u>, \sigma} = U_1(i).
	\]
	Otherwise, let $r, s \in [k_M]$ be such that $\overline{M}_{<r>, \sigma}$ is properly contained in $M_{<s>, \sigma}$ and
	\[
	\Lambda(M)_{<u>, \sigma} = M_{<r>, \sigma} \cap M_{<s>, \sigma}.
	\]
	
	Since $\Lambda(M)_{<u>, \sigma}$ has $0$'s in columns $i$ and $i+1$, and $1$'s in the remaining columns, both $M_{<r>, \sigma}$ and $M_{<s>, \sigma}$ contain $1$'s in the columns other than $i$ and $i+1$. Moreover, since $\overline{M}_{<r>, \sigma}$ is properly contained in $M_{<s>, \sigma}$, the rows $M_{<r>, \sigma}$ and $M_{<s>, \sigma}$ do not coincide in the columns $i$ and $i+1$.
	
	Let $(\alpha, \beta)$ and $(\overline{\alpha}, \overline{\beta})$ be the entries of $M_{<r>, \sigma}$ and $M_{<s>, \sigma}$ at columns $i$ and $i+1$, respectively. If $(\alpha, \beta) = (0,0)$ (respectively, $(\alpha, \beta) = (1, 1)$), then $M_{<r>, \sigma}$ (respectively, $M_{<s>, \sigma}$ ) coincides with the only row of $U_1(i)$. If $(\alpha, \beta) = (0,1)$ or $(\alpha, \beta) = (1,0)$, then $M_{<r>, \sigma}$ and $M_{<s>, \sigma}$ coincide with the rows of $U_3(i)$.
	
	\textbf{Subcase 2: }Assume that $a_2 = 0$, and let $u = \rho(2)$. If $u \in [k_M]$, then
	\[
	F_{<u>} = \Lambda(M)_{<u>, \sigma} = M_{<u>, \sigma} = U_0(2).
	\]
	Otherwise, let $r, s \in [k_M]$ be such that $\overline{M}_{<r>, \sigma}$ is properly contained in $M_{<s>, \sigma}$ and
	\[
	\Lambda(M)_{<u>, \sigma} = M_{<r>, \sigma} \cap M_{<s>, \sigma}.
	\]
	
	Since $\Lambda(M)_{<u>, \sigma}$ has $0$'s in the columns $5$ and $6$, and $1$'s in the remaining columns, both $M_{<r>, \sigma}$ and $M_{<s>, \sigma}$ contain $1$'s in the columns other than $5$ and $6$. Moreover, since $\overline{M}_{<r>, \sigma}$ is properly contained in $M_{<s>, \sigma}$, the rows $M_{<r>, \sigma}$ and $M_{<s>, \sigma}$ do not coincide in the columns $5$ and $6$.
	
	Let $(\alpha, \beta)$ and $(\overline{\alpha}, \overline{\beta})$ be the entries of $M_{<r>, \sigma}$ and $M_{<s>, \sigma}$ at columns $5$ and $6$, respectively. If $(\alpha, \beta) = (0,0)$ (respectively, $(\alpha, \beta) = (1, 1)$), then $M_{<r>, \sigma}$ (respectively, $M_{<s>, \sigma}$ ) coincides with the only row of $U_0(2)$. If $(\alpha, \beta) = (0,1)$ or $(\alpha, \beta) = (1,0)$, then $M_{<r>, \sigma}$ and $M_{<s>, \sigma}$ coincide with the rows of $U_3(2)$.
	
	Now assume that $a_2 = 1$, and let $u = \rho(2)$. If $u \in [k_M]$, then
	\[
	F_{<u>} = \Lambda(M)_{<u>, \sigma} = M_{<u>, \sigma} = U_1(2).
	\]
	Otherwise, let $r, s \in [k_M]$ be such that $\overline{M}_{<r>, \sigma}$ is properly contained in $M_{<s>, \sigma}$ and
	\[
	\Lambda(M)_{<u>, \sigma} = M_{<r>, \sigma} \cap M_{<s>, \sigma}.
	\]
	
	Since $\Lambda(M)_{<u>, \sigma}$ has $1$'s in columns $5$ and $6$, and $0$'s in the remaining columns, both $M_{<r>, \sigma}$ and $M_{<s>, \sigma}$ contain $1$'s in the columns $5$ and $6$. Moreover, since $\overline{M}_{<r>, \sigma}$ is properly contained in $M_{<s>, \sigma}$, the rows $M_{<r>, \sigma}$ and $M_{<s>, \sigma}$ do not coincide in the columns $1$, $2$, $3$ and $4$.
	
	Let $(\alpha_1, \alpha_2, \alpha_3, \alpha_4)$ and $(\overline{\alpha}_1, \overline{\alpha}_2, \overline{\alpha}_3, \overline{\alpha}_4)$ be the entries of $M_{<r>, \sigma}$ and $M_{<s>, \sigma}$ at columns $1$, $2$, $3$, and $4$, respectively. Let $\alpha = \alpha_1 \alpha_2 \alpha_3 \alpha_4$. Note that $H_2(\alpha)$ is contained in $\Lambda(M)$ as a configuration. Based on the argument of the previous subcase, we have $\alpha = \{0000, 0011, 1100, 1111\}$. If $\alpha = 0000$ (respectively, $\alpha = 1111$), then $M_{<r>, \sigma}$ (respectively, $M_{<s>, \sigma}$) coincides with the only row of $U_1(2)$. If $\alpha = 0011$ or $\alpha = 1100$, then $M_{<r>, \sigma}$ and $M_{<s>, \sigma}$ coincide with the rows of $U_2(2)$.
	
	\textbf{Subcase 3: }By construction, $a_4 = 0$. Let $u = \rho(4)$. If $u \in [k_M]$, then
		\[
	F_{<u>} = \Lambda(M)_{<u>, \sigma} = M_{<u>, \sigma} = U_0(4).
	\]
	Otherwise, let $r, s \in [k_M]$ be such that $\overline{M}_{<r>, \sigma}$ is properly contained in $M_{<s>, \sigma}$ and
	\[
	\Lambda(M)_{<u>, \sigma} = M_{<r>, \sigma} \cap M_{<s>, \sigma}.
	\]
	
	Since $\Lambda(M)_{<u>, \sigma}$ has $1$'s in columns $1$, $4$ and $5$, and $0$'s in the remaining columns, both $M_{<r>, \sigma}$ and $M_{<s>, \sigma}$ contain $1$'s in the columns $1$, $4$ and $5$. Moreover, since $\overline{M}_{<r>, \sigma}$ is properly contained in $M_{<s>, \sigma}$, the rows $M_{<r>, \sigma}$ and $M_{<s>, \sigma}$ do not coincide in the columns $2$, $3$ and $6$.
	
	Let $(\gamma_1, \gamma_2, \gamma_3)$ and $(\overline{\gamma}_1, \overline{\gamma}_2, \overline{\gamma}_3)$ be the entries of $M_{<r>, \sigma}$ and $M_{<s>, \sigma}$ at columns $2$, $3$, and $6$, respectively. Let $\gamma = \gamma_1 \gamma_2 \gamma_3$. Note that $G(\gamma)$ is contained in $\Lambda(M)$ as a configuration. Suppose, for the sake of contradiction, that $\gamma$ is nonconstant. By \textit{Lemma}~\ref{G}, $G(\gamma)$ contains some matrix $F' \in \ForbRow$ as a configuration that has fewer columns than $F$. Hence, $\Lambda(M)$ contains $F'$ as a configuration with fewer columns than $F$, contradicting the choice of $F$. Therefore, $\gamma$ must be constant. If $\gamma = 000$ (respectively, $\gamma = 111$), then $M_{<r>, \sigma}$ (respectively, $M_{<s>, \sigma}$) coincides with the only row of $U_0(4)$.
	
	Therefore, for each $i \in [4]$ and $j \in \{0, 1, 2, 3\}$, each row of $U_j(i)$ coincides with some row of $M_{id_{k_M}, \sigma}$. Thus, there exists a quaternary sequence $b = b_1 b_2 b_3 b_4$ such that $W(b)$ is contained in $M$ as a configuration. Since, by \textit{Lemma}~\ref{W}, $W(b)$ contains some matrix $F' \in \IForbRow$ as a configuration, it follows that $M$ contains $F'$ as a configuration. This contradicts the assumption that $M$ contains no matrix in $\IForbRow$ as a configuration. Therefore, Case 2 cannot occur.

	The fact that Case 1 and Case 2 cannot occur contradicts $F \in \ForbRow$. This contradiction completes the proof of the Theorem.
		\end{proof}
		\section{Forbidden Induced Subgraphs of Semi-Transitive Split Graphs}
		In this section, we establish the connection between the $I$-circular property of matrices and semi-transitive split graphs, thereby characterizing the latter through forbidden induced subgraphs. The following theorem follows from the proofs of \textit{Theorem 4} and \textit{Theorem 5} in \cite{kitaev2024semi}.
		
		\begin{theorem}\label{sgicp}
			A split graph $G$ is semi-transitive if and only if the matrix $A(G)$ has $I$-circular property.
		\end{theorem}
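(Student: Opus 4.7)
The plan is to deduce Theorem \ref{sgicp} from the Kitaev--Pyatkin characterization of semi-transitive split graphs recalled in Section~2.5 by showing that for the adjacency matrix $A(G)$ of a split graph (which, by our convention, has no all-ones row), the $I$-circular property is equivalent to the conjunction of the two conditions used by Kitaev and Pyatkin: (i) $A(G)$ has the circular-ones property for rows; and (ii) if a row has the form $1^a 0^b 1^c$ with $a, b, c \geq 1$ in the witnessing order, no other row has ones in all positions from $a$ to $a+b+1$.

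For the direction $I$-circular $\Rightarrow$ (i)\,\&\,(ii), I would fix an $I$-circular order $\preccurlyeq_C$ of $A(G)$. Condition (i) is immediate since any $I$-circular order is already a circular-ones order. For (ii), suppose some row $r$ has the form $1^a 0^b 1^c$ in $\preccurlyeq_C$ and, for contradiction, some other row $s$ has ones at all positions $a, a+1, \ldots, a+b+1$. Then $r \cap s$ contains $a$ and $a+b+1$ but has zeros at $a+1,\ldots,a+b$ (since $r$ does). As $r \cap s$ must itself be a circular interval of $\preccurlyeq_C$, the only circular interval containing both $a$ and $a+b+1$ while avoiding the gap $\{a+1,\ldots,a+b\}$ is the entire wrap-around arc $\{a+b+1,\ldots,k,1,\ldots,a\}$. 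This forces $s$ to have ones throughout that arc; combined with the hypothesis on $s$, $s$ becomes the all-ones row, contradicting our split-graph convention.

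For the converse, let $\preccurlyeq_C$ be a circular-ones order of $A(G)$ satisfying (ii); I would show that for any two rows $r, s$ the intersection $r \cap s$ is a circular interval of $\preccurlyeq_C$. Each of $r, s$ is an arc in the cyclic arrangement of columns, and the intersection of two arcs is either empty, a single arc, or a disjoint union of two arcs. Assume for contradiction that we land in the last case. A short geometric argument shows that at least one of $r, s$ must have wrap form $1^{\ast}0^{\ast}1^{\ast}$ in $\preccurlyeq_C$ (if both were of no-wrap form $0^{\ast}1^{\ast}0^{\ast}$, their intersection would itself be a single no-wrap interval). Say $s = 1^{a_s} 0^{b_s} 1^{c_s}$ with all exponents positive. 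The two-arc geometry then forces the zero block $\{a_s+1,\ldots,a_s+b_s\}$ of $s$ to sit strictly inside $r$, i.e.\ $r$ has ones at all positions $a_s, a_s+1, \ldots, a_s+b_s+1$, contradicting (ii) applied to $s$.

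The main obstacle will be the geometric step in the converse direction: one must carefully verify the elementary fact that ``two arcs on a cycle intersect in a disjoint union of two arcs'' is equivalent to ``the complement of one lies strictly interior to the other'', then translate this cleanly into the explicit wrap-form pattern of (ii), and handle the degenerate cases (trivial rows, or rows whose zero block has length $1$ or $2$) where what ultimately rules out the bad configuration is the split-graph convention forbidding an all-ones row rather than condition (ii) itself.
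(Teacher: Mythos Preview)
Your plan is correct and is the natural way to substantiate what the paper merely asserts. The paper does not give a self-contained proof of Theorem~\ref{sgicp}; it simply states that the result ``follows from the proofs of Theorem~4 and Theorem~5 in~\cite{kitaev2024semi}''. Your argument instead derives the result directly from the Kitaev--Pyatkin characterization that \emph{is} quoted in Section~2.5, by showing that an $I$-circular order is exactly a circular-ones order satisfying their wrap condition~(ii). This is precisely the bridge one would expect the cited proofs to supply, so your route is essentially the same in spirit, just made explicit.

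Two small points of bookkeeping to tighten: in the converse direction, once you know $r\cap s$ splits into two arcs you have that $\bar r$ and $\bar s$ are cyclically disjoint and non-adjacent, which is symmetric; so it is the zero block of \emph{one of} $r,s$ that sits strictly inside the other, and you should pick that one to play the role of $s$ (your text fixes $s$ a step too early, before this is determined). Second, your forward-direction contradiction really does hinge on the split-graph convention that no row of $A(G)$ is all ones, exactly as you flag; the ``degenerate'' cases you worry about (short zero blocks) do not actually need separate treatment, since $a_s,b_s,c_s\ge 1$ already guarantees $1\le a_s$ and $a_s+b_s+1\le k$.
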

\begin{figure}[h]
\begin{minipage}{0.32\textwidth}
	\centering
	\begin{tikzpicture}[line cap=round,line join=round,>=triangle 45,x=1cm,y=1cm, scale=2]
		\begin{scriptsize}
			% vertices
			\node[draw, circle, fill = qqzzcc] (k-1) at (1.2,3.25) {};
			\draw[color=qqzzcc] (1,3.4) node {k-1};
			\node[draw, circle, fill = qqzzcc] (3) at (1.8,3.25) {};
			\draw[color=qqzzcc] (1.88,3.4) node {3};
			\node[draw, circle, fill = qqzzcc] (2) at (2.0,2.75) {};
			\draw[color=qqzzcc] (2.1532493492248213,2.720110568527594) node {2};
			\node[draw, circle, fill = qqzzcc] (1) at (1.5,2.25) {};
			\draw[color=qqzzcc] (1.49,2.56) node {1};
			\node[draw, circle, fill = qqzzcc] (k) at (1.0,2.75) {};
			\draw[color=qqzzcc] (0.85978665910432,2.730137411086668) node {k};
			\node[draw, circle, fill = qqccqq] (a) at (0.7,3.1) {};
			\node[draw, circle, fill = qqccqq] (b) at (2.27,3.1) {};
			\node[draw, circle, fill = qqccqq] (c) at (2,2.3) {};
			\node[draw, circle, fill = qqccqq] (d) at (1,2.3) {};
			\draw[color=black] (1.5,3.4) node {...};
			\node[draw, circle, fill = qqzzcc] (k+1) at (1.5,1.9) {};
			\draw[color=qqzzcc] (1.72,1.8) node {k+1};
			
			% edges
			\draw (k-1) -- (3);
			\draw (2) -- (3);
			\draw (1) -- (3);
			\draw (k) -- (3);
			\draw (k+1) -- (3);
			\draw (k-1) -- (1);
			\draw (2) -- (1);
			\draw (k+1) -- (1);
			\draw (k) -- (1);
			\draw (k-1) -- (2);
			\draw (k+1) -- (2);
			\draw (k) -- (2);
			\draw (k-1) -- (k+1);
			\draw (k) -- (k+1);
			\draw (k-1) -- (k);
			\draw (a) -- (k);
			\draw (a) -- (k-1);
			\draw (3) -- (b);
			\draw (2) -- (b);
			\draw (1) -- (c);
			\draw (2) -- (c);
			\draw (k) -- (d);
			\draw (1) -- (d);
		\end{scriptsize}
	\end{tikzpicture}\\{$\MIast{k}$ for $k \geq 3$}
\end{minipage}
\hfill
\begin{minipage}{0.32\textwidth}
	\centering
\begin{tikzpicture}[line cap=round,line join=round,>=triangle 45,x=1cm,y=1cm, scale=2]
	\begin{scriptsize}
		% vertices
		\node[draw, circle, fill = qqzzcc] (k-1) at (1.2,3.25) {};
		\draw[color=qqzzcc] (1,3.4) node {k-2};
		\node[draw, circle, fill = qqzzcc] (3) at (1.8,3.25) {};
		\draw[color=qqzzcc] (1.88,3.4) node {3};
		\node[draw, circle, fill = qqzzcc] (2) at (2.0,2.75) {};
		\draw[color=qqzzcc] (2.15,2.72) node {2};
		\node[draw, circle, fill = qqzzcc] (1) at (1.8,2.25) {};
		\draw[color=qqzzcc] (1.88,2.1) node {1};
		\node[draw, circle, fill = qqzzcc] (k) at (1.0,2.75) {};
		\draw[color=qqzzcc] (0.75,2.73) node {k - 1};
		\node[draw, circle, fill = qqccqq] (a) at (0.7,3.1) {};
		\node[draw, circle, fill = qqccqq] (b) at (2.27,3.1) {};
		\node[draw, circle, fill = qqccqq] (c) at (2.25,2.35) {};
		\node[draw, circle, fill = qqccqq] (d) at (1.5,1.9) {};
		\node[draw, circle, fill = qqccqq] (e) at (0.65,2.35) {};
		\draw[color=black] (1.5,3.4) node {...};
		\node[draw, circle, fill = qqzzcc] (k+1) at (1.2,2.25) {};
		\draw[color=qqzzcc] (1.1,2.1) node {k};
		
		% edges
		\draw (k-1) -- (3);
		\draw (2) -- (3);
		\draw (1) -- (3);
		\draw (k) -- (3);
		\draw (k+1) -- (3);
		\draw (k-1) -- (1);
		\draw (2) -- (1);
		\draw (k+1) -- (1);
		\draw (k) -- (1);
		\draw (k-1) -- (2);
		\draw (k+1) -- (2);
		\draw (k) -- (2);
		\draw (k-1) -- (k+1);
		\draw (k) -- (k+1);
		\draw (k-1) -- (k);
		\draw (a) -- (k);
		\draw (a) -- (k-1);
		\draw (3) -- (b);
		\draw (2) -- (b);
		\draw (1) -- (c);
		\draw (2) -- (c);
		\draw (k+1) -- (d);
		\draw (1) -- (d);
		\draw (k-1) -- (d);
		\draw (2) -- (d);
		\draw (3) -- (d);
		\draw (k) -- (e);
		\draw (k+1) -- (e);
		\draw (2) -- (e);
		\draw (3) -- (e);
		\draw (k-1) -- (e);
	\end{scriptsize}
\end{tikzpicture}\\{$\MII{k}$ for $k \geq 4$}
\end{minipage}
\hfill
\begin{minipage}{0.32\textwidth}
	\centering
\begin{tikzpicture}[line cap=round,line join=round,>=triangle 45,x=1cm,y=1cm, scale=2]
	\begin{scriptsize}
		% vertices
		\node[draw, circle, fill = qqzzcc] (k-1) at (1.2,3.25) {};
		\draw[color=qqzzcc] (1.1,3.4) node {k-1};
		\node[draw, circle, fill = qqzzcc] (3) at (1.8,3.25) {};
		\draw[color=qqzzcc] (1.88,3.4) node {3};
		\node[draw, circle, fill = qqzzcc] (2) at (2.0,2.75) {};
		\draw[color=qqzzcc] (2.15,2.72) node {2};
		\node[draw, circle, fill = qqzzcc] (1) at (1.8,2.25) {};
		\draw[color=qqzzcc] (1.88,2.1) node {1};
		\node[draw, circle, fill = qqzzcc] (k) at (1.0,2.75) {};
		\draw[color=qqzzcc] (0.8,2.73) node {k};
		\node[draw, circle, fill = qqccqq] (a) at (0.7,3.1) {};
		\node[draw, circle, fill = qqccqq] (b) at (2.27,3.1) {};
		\node[draw, circle, fill = qqccqq] (c) at (2.15,2.35) {};
		\node[draw, circle, fill = qqccqq] (d) at (1.5,1.9) {};
		\draw[color=black] (1.5,3.4) node {...};
		\node[draw, circle, fill = qqzzcc] (k+1) at (1.2,2.25) {};
		\draw[color=qqzzcc] (1,2.1) node {k + 1};
		
		% edges
		\draw (k-1) -- (3);
		\draw (2) -- (3);
		\draw (1) -- (3);
		\draw (k) -- (3);
		\draw (k+1) -- (3);
		\draw (k-1) -- (1);
		\draw (2) -- (1);
		\draw (k+1) -- (1);
		\draw (k) -- (1);
		\draw (k-1) -- (2);
		\draw (k+1) -- (2);
		\draw (k) -- (2);
		\draw (k-1) -- (k+1);
		\draw (k) -- (k+1);
		\draw (k-1) -- (k);
		\draw (a) -- (k);
		\draw (a) -- (k-1);
		\draw (3) -- (b);
		\draw (2) -- (b);
		\draw (1) -- (c);
		\draw (2) -- (c);
		\draw (k+1) -- (d);
		\draw (k-1) -- (d);
		\draw (2) -- (d);
		\draw (3) -- (d);
	\end{scriptsize}
\end{tikzpicture}\\{$\MIII{k}$ for $k \geq 3$}
\end{minipage}

\vspace{0.5cm}
\noindent
\begin{minipage}{0.24\textwidth}
	\centering
\begin{tikzpicture}[line cap=round,line join=round,>=triangle 45,x=1cm,y=1cm, scale=1.75]
	\begin{scriptsize}
		% vertices
		\node[draw, circle, fill = qqzzcc] (4) at (1.2,3.25) {};
		\draw[color=qqzzcc] (1,3.4) node {4};
		\node[draw, circle, fill = qqzzcc] (3) at (1.8,3.25) {};
		\draw[color=qqzzcc] (1.88,3.4) node {3};
		\node[draw, circle, fill = qqzzcc] (2) at (2.05,2.75) {};
		\draw[color=qqzzcc] (2.2,2.72) node {2};
		\node[draw, circle, fill = qqzzcc] (1) at (1.8,2.25) {};
		\draw[color=qqzzcc] (1.88,2.1) node {1};
		\node[draw, circle, fill = qqzzcc] (5) at (0.95,2.75) {};
		\draw[color=qqzzcc] (0.75,2.73) node {5};
		\node[draw, circle, fill = qqccqq] (b) at (1.5,2) {};
		\node[draw, circle, fill = qqccqq] (c) at (2.3,2.25) {};
		\node[draw, circle, fill = qqccqq] (d) at (1.5,3.7) {};
		\node[draw, circle, fill = qqccqq] (e) at (0.7,2.25) {};
		\node[draw, circle, fill = qqzzcc] (6) at (1.2,2.25) {};
		\draw[color=qqzzcc] (1.1,2.1) node {6};
		
		% edges
		\draw (4) -- (3);
		\draw (2) -- (3);
		\draw (1) -- (3);
		\draw (5) -- (3);
		\draw (6) -- (3);
		\draw (4) -- (1);
		\draw (2) -- (1);
		\draw (6) -- (1);
		\draw (5) -- (1);
		\draw (4) -- (2);
		\draw (6) -- (2);
		\draw (5) -- (2);
		\draw (4) -- (6);
		\draw (5) -- (6);
		\draw (4) -- (5);
		\draw (4) -- (d);
		\draw (3) -- (d);
		\draw (5) -- (e);
		\draw (6) -- (e);
		\draw (1) -- (c);
		\draw (2) -- (c);
		\draw (4) -- (b);
		\draw (6) -- (b);
		\draw (2) -- (b);
	\end{scriptsize}
\end{tikzpicture}\\{$\MIV$}
\end{minipage}
\hfill
\begin{minipage}{0.24\textwidth}
	\centering
\begin{tikzpicture}[line cap=round,line join=round,>=triangle 45,x=1cm,y=1cm, scale=2]
	\begin{scriptsize}
		% vertices
		\node[draw, circle, fill = qqzzcc] (4) at (1.2,3.25) {};
		\draw[color=qqzzcc] (1,3.4) node {4};
		\node[draw, circle, fill = qqzzcc] (3) at (1.8,3.25) {};
		\draw[color=qqzzcc] (1.88,3.4) node {3};
		\node[draw, circle, fill = qqzzcc] (2) at (2.0,2.75) {};
		\draw[color=qqzzcc] (2.15,2.72) node {2};
		\node[draw, circle, fill = qqzzcc] (1) at (1.5,2.25) {};
		\draw[color=qqzzcc] (1.49,2.56) node {1};
		\node[draw, circle, fill = qqzzcc] (5) at (1.0,2.75) {};
		\draw[color=qqzzcc] (0.75,2.73) node {5};
		\node[draw, circle, fill = qqccqq] (e) at (1.5,1.9) {};
		\node[draw, circle, fill = qqccqq] (c) at (2,2.3) {};
		\node[draw, circle, fill = qqccqq] (d) at (1.5,3.7) {};
		\node[draw, circle, fill = qqccqq] (b) at (1,2.3) {};
		
		% edges
		\draw (4) -- (3);
		\draw (2) -- (3);
		\draw (1) -- (3);
		\draw (5) -- (3);
		\draw (4) -- (1);
		\draw (2) -- (1);
		\draw (5) -- (1);
		\draw (4) -- (2);
		\draw (5) -- (2);
		\draw (4) -- (5);
		\draw (4) -- (d);
		\draw (3) -- (d);
		\draw (1) -- (e);
		\draw (2) -- (e);
		\draw (3) -- (e);
		\draw (4) -- (e);
		\draw (1) -- (c);
		\draw (2) -- (c);
		\draw (4) -- (b);
		\draw (5) -- (b);
		\draw (1) -- (b);
	\end{scriptsize}
\end{tikzpicture}\\{$\MV$}
\end{minipage}
\hfill
\begin{minipage}{0.24\textwidth}
	\centering
\begin{tikzpicture}[line cap=round,line join=round,>=triangle 45,x=1cm,y=1cm, scale=2]
	\begin{scriptsize}
		% vertices
		\node[draw, circle, fill = qqzzcc] (4) at (1.5,2.5) {};
		\draw[color=qqzzcc] (1.6,2.65) node {4};
		\node[draw, circle, fill = qqzzcc] (2) at (1.95,2.75) {};
		\draw[color=qqzzcc] (2.15,2.72) node {2};
		\node[draw, circle, fill = qqzzcc] (1) at (1.5,2.15) {};
		\draw[color=qqzzcc] (1.49,1.95) node {1};
		\node[draw, circle, fill = qqzzcc] (5) at (1.05,2.75) {};
		\draw[color=qqzzcc] (0.77,2.73) node {3};
		\node[draw, circle, fill = qqccqq] (c) at (2.25,2.15) {};
		\node[draw, circle, fill = qqccqq] (d) at (1.5,3.6) {};
		\node[draw, circle, fill = qqccqq] (b) at (0.75,2.15) {};
		
		% edges
		\draw (4) -- (1);
		\draw (2) -- (1);
		\draw (5) -- (1);
		\draw (4) -- (2);
		\draw (5) -- (2);
		\draw (4) -- (5);
		\draw (5) -- (d);
		\draw (2) -- (d);
		\draw (4) -- (d);
		\draw (4) -- (c);
		\draw (4) -- (b);
		\draw (1) -- (c);
		\draw (2) -- (c);
		\draw (5) -- (b);
		\draw (1) -- (b);
	\end{scriptsize}
\end{tikzpicture}\\{$\MVI$}
\end{minipage}
\hfill
\begin{minipage}{0.24\textwidth}
	\centering
	\begin{tikzpicture}[line cap=round,line join=round,>=triangle 45,x=1cm,y=1cm, scale=2]
		\begin{scriptsize}
			% vertices
			\node[draw, circle, fill = qqzzcc] (4) at (1.2,3.25) {};
			\draw[color=qqzzcc] (1,3.4) node {4};
			\node[draw, circle, fill = qqzzcc] (3) at (1.8,3.25) {};
			\draw[color=qqzzcc] (1.88,3.4) node {3};
			\node[draw, circle, fill = qqzzcc] (2) at (2.0,2.75) {};
			\draw[color=qqzzcc] (2.15,2.72) node {2};
			\node[draw, circle, fill = qqzzcc] (1) at (1.5,2.25) {};
			\draw[color=qqzzcc] (1.49,2.56) node {1};
			\node[draw, circle, fill = qqzzcc] (5) at (1.0,2.75) {};
			\draw[color=qqzzcc] (0.75,2.73) node {5};
			\node[draw, circle, fill = qqccqq] (e) at (1.5,1.95) {};
			\node[draw, circle, fill = qqccqq] (c) at (2,2.3) {};
			\node[draw, circle, fill = qqccqq] (d) at (1.5,3.7) {};
			\node[draw, circle, fill = qqccqq] (b) at (1,2.3) {};
			
			% edges
			\draw (4) -- (3);
			\draw (2) -- (3);
			\draw (1) -- (3);
			\draw (5) -- (3);
			\draw (4) -- (1);
			\draw (2) -- (1);
			\draw (5) -- (1);
			\draw (4) -- (2);
			\draw (5) -- (2);
			\draw (4) -- (5);
			\draw (4) -- (d);
			\draw (3) -- (d);
			\draw (2) -- (e);
			\draw (3) -- (e);
			\draw (5) -- (e);
			\draw (1) -- (c);
			\draw (2) -- (c);
			\draw (4) -- (b);
			\draw (5) -- (b);
			\draw (1) -- (b);
		\end{scriptsize}
	\end{tikzpicture}\\{$\MVII$}
\end{minipage}
\vspace{0.25cm}\caption{Minimal Forbidden Induced Subgraphs for Semi-Transitive Spit Graphs $\GForbRow$.}
\label{gsplit}
\end{figure}

For a $(0,1)$-matrix $M$ of size $m \times n$, we define $SG(M)$ as the split graph whose independent set corresponds to the rows of $M$, clique corresponds to its columns, and where a vertex $i$ from the independent set is adjacent to a vertex $j$ from the clique if and only if $m_{ij} = 1$.

\begin{lemma}\label{gsp}
	If $F \in \IForbRow$, then $SG(F)$ contains an induced subgraph isomorphic to some graph in $\GForbRow$.
\end{lemma}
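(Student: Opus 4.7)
The plan is to proceed by case analysis on $F \in \IForbRow$. For each $F$, I would write down $SG(F)$ directly from the matrix---the rows of $F$ give the neighborhoods of the independent-set vertices in the clique formed by the columns---and exhibit an induced subgraph isomorphic to one of the graphs in $\GForbRow$ drawn in Figure~\ref{gsplit}.

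When $F$ is $\MIast{k}$, $\MIII{k}$, $\MII{k+1}$ (for $k\geq 3$), $\MIV$, $\MV$, or $\MVI$, the matrix name is deliberately mirrored by a graph name in $\GForbRow$, and the verification reduces to reading off the row sets of $F$ and matching them against the neighborhoods drawn in Figure~\ref{gsplit}. For $\MIV$, $\MV$, and $\MVI$ this is a direct isomorphism between $SG(F)$ and the graph of the same name. For the parameterized families, the bijection sending columns to the displayed clique vertices, together with the bijection sending rows to the displayed independent-set vertices $a,b,c,d$ (plus, for $\MII{k+1}$, the middle diagonal rows suppressed by the ``$\dots$'' in the figure), supplies the isomorphism; in small-$k$ edge cases where two labels in the figure coincide, one passes to the appropriate induced subgraph rather than claiming a direct isomorphism.

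For the two complemented matrices the plan is to compute $F$ explicitly and match by inspection. For $F = 0101\miop \MIast{4}$, the four rows become $\{1,2\}$, $\{1,4,5\}$, $\{3,4\}$, $\{2,3,5\}$, which coincide with the neighborhoods of the independent-set vertices $c,b,d,e$ of $\MVII$; using the identity column map, this gives $SG(F)\cong \MVII$. For $F = 0100\miop \MII{4}$, the four rows become $\{1,2\}$, $\{1,4\}$, $\{1,2,4\}$, $\{2,3,4\}$; an analogous explicit comparison, after a suitable column relabeling and possibly restricting to a subset of rows, identifies the required induced subgraph from $\GForbRow$.

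The main obstacle is in the complemented cases: the row complementation modifies which columns each affected row sees, so $SG(F)$ is not $SG$ of the underlying unmodified pattern, and the identification with a specific graph in $\GForbRow$ has to be established by producing a concrete bijection of columns and rows rather than by name-matching. A secondary difficulty is bookkeeping for small-$k$ edge cases of the parameterized families, where label coincidences in Figure~\ref{gsplit} mean that $SG(F)$ only appears as an induced subgraph of the generically drawn graph and one must check that the right vertices are present.
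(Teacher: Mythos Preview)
Your case-by-case plan matches the paper's proof, and your handling of every case except $F = 0100\miop\MII{4}$ is correct (in particular, your identification $SG(0101\miop\MIast{4})\cong\MVII$ is exactly what the paper uses). The gap is in the last case.

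For $F = 0100\miop\MII{4}$ you correctly compute the rows as $\{1,2\},\{1,4\},\{1,2,4\},\{2,3,4\}$, but the operations you propose---permuting columns and deleting rows---cannot produce a member of $\GForbRow$. Every graph in $\GForbRow$ has clique size at least $4$, so you must keep all four clique vertices if you respect the given split partition; but then any three of your four rows leave a row of degree $3$ alongside rows of degree $2$, which matches none of the $7$- or $8$-vertex graphs in $\GForbRow$. The paper instead deletes the \emph{column}-vertex $3$. In the resulting $7$-vertex graph, row $3$ is adjacent to all three remaining clique vertices, so $\{c_1,c_2,c_4,r_3\}$ is a $4$-clique and $\{r_1,r_2,r_4\}$ is an independent set whose members see $\{c_1,c_2\}$, $\{c_1,c_4\}$, $\{c_2,c_4\}$ respectively; this is exactly $SG(\MIast{3})$, with $r_3$ playing the role of the empty column $k+1$. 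The isomorphism does \emph{not} respect the original split partition---a row-vertex moves to the clique side---which is the point your ``concrete bijection of columns and rows'' remark misses.
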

\begin{proof}
	Let $F$ be a matrix in $\IForbRow$. If $F \in \{\MIast{k}, \MIII{k}, \MII{k+1}\colon k \geq 3\} \cup \{0101 \miop \MIast{4},\, \MIV,\, \MV,\, \MVI\}$, then $SG(F)$ is isomorphic to some graph in $\GForbRow$. If $F = 0100 \miop \MII{4}$, then $SG(F_{\mathrm{id}_4,\langle 1,2,4\rangle})$ is isomorphic to $\MIast{3}$. This completes the proof of the lemma.
\end{proof}

\begin{lemma}\label{nwr}
	All graphs in $\GForbRow$ are minimal non-semi-transitive.
\end{lemma}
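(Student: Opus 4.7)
The plan is to invoke Theorem~\ref{sgicp}, which equates semi-transitivity of a split graph $G$ with the $I$-circular property of its bipartite adjacency matrix $A(G)$, together with the forbidden-configuration characterization in Theorem~\ref{icp}. The argument naturally splits into two halves: first, to show that each $G \in \GForbRow$ fails to be semi-transitive; and second, to show that every proper induced subgraph of such a $G$ is semi-transitive.

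For the non-semi-transitivity direction, I would pair each graph in $\GForbRow$ with a matrix in $\IForbRow$. Specifically, for $G \in \{\MIast{k}, \MIII{k}, \MII{k+1}, \MIV, \MV, \MVI\}$ the matrix $A(G)$ equals the namesake matrix $F \in \IForbRow$ up to permutations of rows and columns, directly from the construction of the drawings in Figure~\ref{gsplit}. For $G = \MVII$, I would read off from the figure that the four independent-set neighborhoods are $\{1,2\}$, $\{1,4,5\}$, $\{3,4\}$, $\{2,3,5\}$, which match the four rows of $0101\miop\MIast{4}$, so $A(\MVII)$ represents the same configuration as that element of $\IForbRow$. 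In each case, Corollary~\ref{NFIM} tells us $F$ fails the $I$-circular property, hence so does $A(G)$, and Theorem~\ref{sgicp} yields that $G$ is not semi-transitive.

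For the minimality direction, I would use the fact (noted in Section~2) that the class of semi-transitive graphs is hereditary, so it suffices to prove $G - v$ is semi-transitive for every vertex $v \in V(G)$. If $v$ lies in the independent set of $G$, then $A(G - v)$ is obtained from $A(G)$ by deleting the row for $v$; if $v$ lies in the clique, $A(G - v)$ is $A(G)$ with the column for $v$ deleted. In either case $A(G - v)$ is a proper submatrix of $A(G)$, which up to permutations equals some $F \in \IForbRow$. By Lemma~\ref{L1}, $F$ is a minimal forbidden submatrix for the $I$-circular property, so every proper submatrix of $F$ contains no member of $\IForbRow$ as a configuration; by Theorem~\ref{icp} such a submatrix enjoys the $I$-circular property. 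Hence $A(G - v)$ has the $I$-circular property, and Theorem~\ref{sgicp} gives that $G - v$ is semi-transitive, which together with heredity makes every proper induced subgraph of $G$ semi-transitive.

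The main obstacle I anticipate is a bookkeeping subtlety around the standing assumption that no independent-set vertex is adjacent to every clique vertex. When one deletes a clique vertex $v$, it is conceivable that some $w$ in the independent set becomes adjacent to every remaining clique vertex, so the induced split partition of $G - v$ might violate the assumption under which Theorem~\ref{sgicp} was stated. I would address this by observing that such a vertex $w$ corresponds to an all-ones row in $A(G - v)$, which is a trivial row, so it is vacuously a circular interval under any column order and its intersections with other rows are just those other rows; hence reclassifying $w$ into the clique (which restores the standing assumption) does not change whether the $I$-circular property holds. Once this technicality is dispatched, the two halves above combine to complete the proof.
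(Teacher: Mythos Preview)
Your proposal is correct and follows essentially the same approach as the paper: observe that $A(G)$ coincides (up to row/column permutation) with a matrix in $\IForbRow$, then invoke Corollary~\ref{NFIM} for non-semi-transitivity and Lemma~\ref{L1} for minimality, all transferred to the graph side via Theorem~\ref{sgicp}. One minor simplification: the detour through Theorem~\ref{icp} in your minimality step is unnecessary, since by definition a minimal forbidden submatrix already has the property that every proper submatrix enjoys the $I$-circular property.
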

\begin{proof}
	Let $G$ be a graph in $\GForbRow$. Observe that $A(G) \in \IForbRow$. Therefore, by \textit{Corollary~\ref{NFIM}}, \textit{Lemma~\ref{L1}}, and \textit{Theorem~\ref{sgicp}}, $G$ is a minimal non-semi-transitive graph.
\end{proof}
\begin{theorem}
	Let $G$ be a split graph. Then, $G$ is semi-transitive if and only if $G$ contains none of the graphs in $\GForbRow$ (depicted in Figure~\ref{gsplit}) as induced subgraph.
\end{theorem}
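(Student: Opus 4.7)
The plan is to deduce the theorem as a direct corollary of the matrix-theoretic characterization developed in Section~3, together with \textit{Theorem~\ref{sgicp}} and \textit{Lemmas~\ref{gsp} and \ref{nwr}}. Specifically, \textit{Theorem~\ref{sgicp}} translates semi-transitivity of a split graph $G$ into the $I$-circular property of $A(G)$, and \textit{Theorem~\ref{icp}} reduces the latter to the absence of every matrix in $\IForbRow$ as a configuration of $A(G)$. The proof therefore reduces to bridging matrix configurations in $A(G)$ with induced subgraphs of $G$.

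For the necessity direction, I would argue by contradiction. Assume $G$ is semi-transitive and, for contradiction, that $G$ contains some $H \in \GForbRow$ as an induced subgraph. Since the class of semi-transitive graphs is hereditary, $H$ would itself be semi-transitive, contradicting \textit{Lemma~\ref{nwr}}, which asserts that every graph in $\GForbRow$ is non-semi-transitive.

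For the sufficiency direction, I would again proceed by contradiction. Suppose $G$ contains no graph from $\GForbRow$ as induced subgraph but is not semi-transitive. By \textit{Theorem~\ref{sgicp}}, $A(G)$ does not have the $I$-circular property, and by \textit{Theorem~\ref{icp}}, $A(G)$ contains some $F \in \IForbRow$ as a configuration. The key observation is that if $F = A(G)_{\rho,\sigma}$ for some row map $\rho$ and column map $\sigma$, then the vertices of the independent set of $G$ indexed by $\rho$, together with the clique vertices indexed by $\sigma$, induce in $G$ exactly the graph $SG(F)$: the clique and independent-set structure is preserved under vertex selection, while the bipartite adjacencies between the two parts are encoded by $F$. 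Hence $SG(F)$ appears as an induced subgraph of $G$, and by \textit{Lemma~\ref{gsp}}, $SG(F)$ in turn contains some graph in $\GForbRow$ as an induced subgraph, which is therefore also induced in $G$. This contradicts the hypothesis and completes the proof.

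The only nontrivial point in the argument is the passage from matrix configurations to induced split subgraphs via $SG(\cdot)$; this is immediate from the construction but deserves to be stated explicitly, since it is the conceptual link between the matrix-level characterization of Section~3 and the graph-level statement of the theorem. No further technical obstacle arises, as the heavy lifting has already been done in \textit{Theorems~\ref{icp} and \ref{sgicp}} and \textit{Lemmas~\ref{gsp} and \ref{nwr}}.
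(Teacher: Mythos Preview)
Your proposal is correct and follows essentially the same approach as the paper's proof: both directions proceed by contradiction, invoking \textit{Lemma~\ref{nwr}} with heredity for necessity, and chaining \textit{Theorem~\ref{sgicp}}, \textit{Theorem~\ref{icp}}, and \textit{Lemma~\ref{gsp}} for sufficiency. Your explicit justification of why a configuration $F$ in $A(G)$ yields $SG(F)$ as an induced subgraph of $G$ is a welcome clarification that the paper leaves implicit.
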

\begin{proof}
	Assume that $G$ is semi-transitive. Suppose, for the sake of contradiction, that $G$ contains some graph in $\GForbRow$ as an induced subgraph. Then, by \textit{Lemma~\ref{nwr}} and the hereditary nature of semi-transitive graphs, $G$ would be non-semi-transitive, which is a contradiction.
	
	Suppose that $G$ contains none of the graphs in $\GForbRow$ as an induced subgraph. Assume, for the sake of contradiction, that $G$ is not semi-transitive. Then, by \textit{Theorem~\ref{sgicp}}, $A(G)$ does not have the $I$-circular property. Moreover, by \textit{Theorem~\ref{icp}}, $A(G)$ contains some matrix in $\IForbRow$ as a configuration. Therefore, by \textit{Lemma~\ref{gsp}}, $G$ contains an induced subgraph isomorphic to some graph in $\GForbRow$, which is a contradiction. This concludes the proof.
\end{proof}

\bibliographystyle{plain} % We choose the "plain" reference style
\bibliography{ref1} % Entries are in the refs.bib file
\end{document}